\newcommand{\Z}{{\mathbb Z}}
\newcommand{\N}{{\mathbb N}}
\newcommand{\Q}{{\mathbb Q}}
\newcommand{\CC}{{\mathbb C}}
\newcommand{\Gg}{{\mathcal G}}
\newcommand{\Nc}{{\mathcal N}}
\newcommand{\Oo}{{\mathcal O}}
\newcommand{\Cc}{{\mathcal C}}
\newcommand{\Pc}{{\mathcal P}}
\newcommand{\Rc}{{\mathcal R}}
\newcommand{\Sc}{{\mathcal S}}
\newcommand{\Tc}{{\mathcal T}}
\newcommand{\X}{{\mathcal X}}
\newcommand{\Y}{{\mathcal Y}}
\newcommand{\Zc}{{\mathcal Z}}
\newcommand{\ta}{{\theta}}
\newtheorem{thm}{Theorem}[section]
\newtheorem{lema}[thm]{Lemma}
\newtheorem{ques}[thm]{Question}
\newtheorem{deff}[thm]{Definition}
\newtheorem{rem}[thm]{Remark}
\newtheorem{exam}[thm]{Example}
\numberwithin{equation}{section}
\begin{document}

\setcounter{page}{1}

\title{rational $\ta$-parallelogram envelopes via $\ta$-congruent elliptic curves}
\author{Sajad Salami \and Arman Shamsi Zargar}
\address{Inst\'{i}tuto da Matem\'{a}tica e Estat\'{i}stica, Universidade Estadual do Rio de Janeiro (UERJ), Rio de Janeiro, Brazil}
\email{sajad.salami@ime.uerj.br}
\address{Department of Mathematics and Applications, University of Mohaghegh Ardabili, Ardabil, Iran}
\email{zargar@uma.ac.ir}

\maketitle

\begin{abstract}
We introduce a new generalization of $\ta$-congruent numbers by defining the notion of rational $\ta$-parallelogram envelope for a positive integer $n$, where 
$\ta\in (0, \pi)$ is an angle with rational cosine. Then, 
we study more closely some  problems related to the rational $\ta$-parallelogram envelopes,  
using the arithmetic of algebraic curves. 
Our results generalize the recent work of T.~Ochiai, where only the case $\ta=\pi/2$ was considered. 
Moreover, we answer the open questions  in his paper and their generalizations
for  any Pythagorean angle.
\end{abstract}
\bigskip

{\bf Subjclass 2020: }{Primary 11G05; Secondary  14H52} 

{\bf keywords}: Rational $\ta$-parallelogram envelope, $\ta$-congruent number, elliptic curve.

\section{Introduction}
\label{intro}
A positive integer $n$ is called a {\it congruent number}  if
it is  equal to the area of a right triangle with rational sides.
Equivalently, if there exist positive rational numbers $a$, $b$, and
$c$ such that $a< b <c,$ and
\begin{equation}
\label{eq1}
a^2+b^2=c^2, \quad  ab=2n.
\end{equation}
Determining all the congruent numbers is an old problem in Number Theory.
There are various sorts of generalizations of the problem, 
see for example \cite{fujw1,fujw2,kob1,topyui,ShYsh1,ShYsh2}.

In \cite{fujw1}, M.~Fujiwara introduced and studied an interesting generalization of congruent numbers, called $\ta$-congruent numbers. Afterwards, several authors (\cite{djps,fujw2,j-s,kan1,Moranki,Ochi}) studied this new concept with different approaches using the arithmetic of elliptic curves \cite{kob1,slm1}.
In order to describe this generalization, let  $\ta\in(0, \pi)$ be an angle with rational cosine, i.e., it satisfies  $\cos(\ta)=s/r$ with $r, s \in \Z$  such that $0\leq |s| <r$ and $\gcd (r,s)=1$. 
Let $\N $ be the set of natural numbers. An element $n$ of $\N$ is called a {\it $\ta$-congruent number} if there exists a triangle with rational sides and area equal to $n \sqrt{r^2-s^2}$. Equivalently, if there are three positive rational numbers $a$, $b$, and $c$ satisfying $a\leq  b <c$, and
\begin{equation*}
\label{eq2}
a^2+b^2-\frac{2s}{r}ab=c^2, \quad  ab=2rn.
\end{equation*}
We denote such a triangle by  $(a, b, c)$ and call it a {\it rational  $\ta$-triangle for $n$}.
It is clear that if a positive integer $n$ is $\ta$-congruent with a  $\ta$-triangle $(a, b, c)$, then
$n m^2$ is also a  $\ta$-congruent number with the $\ta$-triangle $(ma, mb, mc)$.
Hence, one may concentrate on the square-free positive integers.
The problem of determining $\ta$-congruent numbers is
related to finding non-2-torsion points on the elliptic curve given by the following Weierstrass  equation,
\begin{equation*}
\label{eqt}
E_\ta^n: y^2=x(x+(r+s)n)(x-(r-s)n),
\end{equation*}
where $r$ and $s$ are as above. One can see  \cite{fujw1,fujw2,kan1} for more details.

In a recent work, T.~Ochiai \cite{Ochi} generalized the congruent numbers in a novel way.
Indeed, he considered  the set $\Nc$  of all positive integers $n$ such that there is an {\it envelope for $n$}, i.e., a quintuple $(a,b,c,d,e)$ of positive rational numbers satisfying
\begin{equation}
\label{eq3}
a^2+b^2=c^2, \quad  a^2+d^2=e^2, \quad a(b+d)=n.
\end{equation}
We note that any congruent number belongs to the set $\Nc$, because if we let $b=d$, then 
Equations~\eqref{eq3} are equivalent to \eqref{eq1} for $4n$  and therefore for $n$. It is proved that $n\in\Nc$ if either $n$ or $2n$ is a congruent number. Moreover, it is shown that
$n\in\Nc$ if and only if  a certain set of algebraic equations
has some rational solutions, see Theorem~2 in \cite{Ochi}.
Given any positive rational number $m$, T.~Ochiai also considered the set  $\Nc(m)$  consisting of all positive integers $n \in \Nc$ such that there exists an envelope $(a, b, c, d, e)$ for $n$ with $d=mb$.
By studying the set of rational points on certain family of elliptic curves and a genus~five algebraic curve,
he provided certain conditions which lead to deciding whether a given positive integer $n$ belongs to $\Nc(m)$.
In the end of his paper \cite{Ochi}, T.~Ochiai asked the following questions concerning  the sets $\Nc$ and $\Nc(m)$.
\begin{ques}
	\label{ques1}
	Notation being as above,  one may ask that:
	\begin{itemize}
		\item [(i)]  Is $\Nc=\N$?
		\item [(ii)] Given any  $n\in \Nc$, are there infinitely many distinct envelopes for $n$?
		\item [(iii)] Given any $n\in \Nc$, are there infinitely many positive rational numbers $m$ such that $n\in\Nc(m)$?
	\end{itemize}
\end{ques}

In this paper, by generalizing the notion of envelope in T.~Ochai's paper \cite{Ochi}, 
we introduce naturally a real-life geometric object, namely, the  {rational \it $\ta$-parallelogram envelope for $n$}.
This new notion can be viewed as a generalization of $\ta$-congruent numbers which involves the rational $\ta$-triangles. 
We will study the set of $\ta$-parallelogram envelopes for positive numbers by investigating  the set of rational points on a certainly defined algebraic curve.
Moreover, we shall respond affirmatively to Question~\ref{ques1} in a more general setting related to the
set of $\ta$-parallelogram envelopes which arises through this study.

\section{The rational $\ta$-parallelogram envelopes}
Let  $\ta\in(0, \pi)$ be an angle with rational cosine, i.e.,    $\cos(\ta)=s/r$
with $r, s \in \Z$ such that $0\leq |s| <r$ and $\gcd (r, s)=1$.
We call  $\ta$ a {\it Pythagorean angle} if its sine is also  rational,  say
$\sin(\ta)= t/r$ for which  $0\leq |t|=\sqrt{r^2-s^2 } <r$ and $\gcd (r,t)=1$.
Then, we start by the following definition.
\begin{deff}	
	\label{def:2.1}
	Let  $\ta\in(0, \pi)$ be  angle with rational cosine. 
	We define   $\Nc_\ta$ to be the set  of all $n \in \N$ such that there exist  positive rational numbers $a, b, c, d, $ and $e$ satisfying
	\begin{equation}
	\label{eq4}
	a^2+b^2 -\frac{2s}{r} ab =c^2, \quad
	a^2+d^2 +\frac{2s}{r} ad =e^2, \quad
	a(b+d)  = rn.
	\end{equation}
\end{deff}

For any $n\in\Nc_\ta$, we denote such a quintuple of positive rational numbers satisfying  Equations~\eqref{eq4} 
by  $(a, b,  c, d, e)_\ta$  and call it a {\it rational $\ta$-parallelogram envelope for $n$}.
This notion with $\ta=\pi/2$ can be viewed as a generalization of the envelopes
defined by T.~Ochiai in  \cite{Ochi}.  Figure~1 shows a $\ta$-parallelogram envelope for $n$.
\begin{figure}[h]
	\includegraphics[height=.09\textheight, width=.40\textwidth]{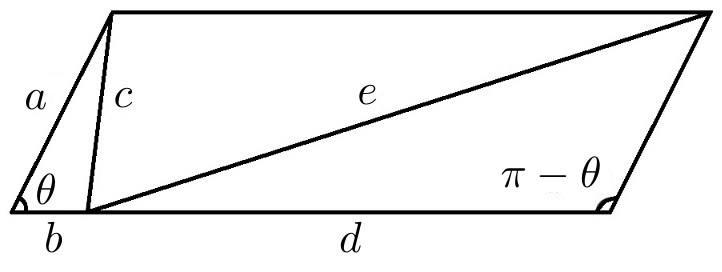}
	\caption{A rational $\ta$-parallelogram envelope for $n$}
\end{figure}

\begin{rem}
	\label{rem1}
	The following facts can be easily deduced from Definition~\ref{def:2.1}.
	\begin{itemize}	
		\item [(1)] A natural number $n$ belongs to $\Nc_\ta$ if and only if $n m^2 \in \Nc_\ta$ for any integer $m$.
		Hence, the area of a rational $\ta$-parallelogram envelope for $n$ is equal to $n \sqrt{r^2-s^2} \bmod (\Q^*)^2$.
		\item [(2)] A quintuple $(a, b,  c, d, e)_\ta$ is a rational $\ta$-parallelogram envelope for $n$ if and only if the quintuple $(a,  d, e, b,  c)_\ta$ is a rational $(\pi-\ta)$-parallelogram envelope for $n$.	
		\item [(3)]  If $n$  is  both  $\ta$- and
		$(\pi-\ta)$-congruent number, then $n\in\Nc_\ta$, but the converse  is not true.
		For instant, $n=7$ is neither a $\pi/3$-congruent number  nor a
		$2\pi/3$-congruent number, but there is a  rational $\pi/3$-parallelogram envelope for $n=7$ given by
		$$(a, b, c, d, e)_{\frac{\pi}{3}}=\left(\frac{143}{42}, \frac{20}{7}, \frac{19}{6}, \frac{1256}{1001}, \frac{3583}{858}\right)_{\frac{\pi}{3}}.$$
	\end{itemize}
\end{rem}

By the above remarks,  we  restrict our study to the case $\ta \in (0, \pi/ 2]$ and the square-free natural numbers $n$ throughout this work. Furthermore, by a   $\ta$-parallelogram envelope for an $n$ we always mean the rational one.

In the next definition, we consider  a subset of $\Nc_{\ta}$ which involves  the $\ta$-parallelogram envelope $(a,b,c,d,e)_{\ta}$
having an   angle  $\tau$ with rational cosine between the sides $c$ and $e$.
\begin{deff}
	Let $\tau \in(0,\pi)$ be an angle with rational cosine. We denote by  $\Nc_\ta(\tau)$  the set
	of all  $n\in \Nc_\ta$ such that there exist positive rational numbers  $a, b, c, d,$ and $e$ satisfying
	\eqref{eq4} and
	$$\cos\tau=\frac{a^2-bd - s a(b-d)/r}{ce}.$$
	For any $n \in \Nc_\ta(\tau)$, we denote  such a  quintuple by  $(a, b, c, d, e)_\ta^\tau$  and
	call it a $\ta$-parallelogram envelope with an angle $\tau$ for $n$.
\end{deff}
Figure~2 shows a $\ta$-parallelogram envelope with an angle $\tau$ for an element $n $ of $\Nc_\ta(\tau)$.
\begin{figure}[h]
	\begin{center}
	\includegraphics[height=.1\textheight,width=.42\textwidth]{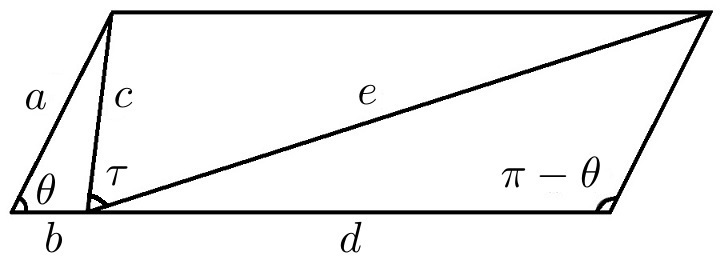}
		\caption{A $\ta$-parallelogram envelope with angle $\tau$ for $n$}
	\end{center}
\end{figure}

The following theorem, proved in Section~\ref{Pr0},  gives a relation between the set  $\Nc_\ta(\tau)$ and the $\tau$-congruent numbers.
\begin{thm}
	\label{main0}
	Let  $\ta \in (0,\pi/2]$ and  $\tau \in (0, \pi)$ be angles with rational cosines satisfying
	$\sin(\ta)/\sin(\tau)\equiv n' \ \bmod \ (\Q^*)^2$ for some   positive integer $n'$.
	Then, $n\in \N$ belongs to $\Nc_\ta(\tau)$ if and only if $2n n'$ is a $\tau$-congruent number. In particular, $n\in\Nc_{\ta}(\ta)$ if and only if $2n$ is a $\ta$-congruent number. Equivalently,
	$n$ is a $\ta$-congruent number if and only if $2n\in\Nc_{\ta}(\ta)$.
\end{thm}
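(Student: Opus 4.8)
The plan is to translate the defining conditions of $\Nc_\ta(\tau)$ into the existence of a rational $\tau$-triangle. Suppose $n \in \Nc_\ta(\tau)$ with a $\ta$-parallelogram envelope $(a,b,c,d,e)_\ta^\tau$. The first two relations in \eqref{eq4} say that the triangle with sides $a,b,c$ has an angle $\ta$ between the sides $a$ and $b$ (law of cosines with $\cos\ta = s/r$), and the triangle with sides $a,d,e$ has an angle $\pi-\ta$ between $a$ and $d$. The prescribed formula for $\cos\tau$ is exactly the law of cosines for the triangle with sides $c$, $e$, and a third side obtained by gluing the two triangles along $a$: indeed the diagonal of the parallelogram (or rather, the segment joining the far endpoints of $b$ and $d$) has the appropriate length, and $c^2 + e^2 - 2ce\cos\tau$ should simplify to a clean expression. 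First I would carry out that computation: show that the triangle $(c,e,f)$ — with $f$ the glued third side — is a rational $\tau$-triangle whose area is $\tfrac12 ce\sin\tau$, and relate that area to $n$ via the third equation $a(b+d)=rn$. The area of the full figure is $\tfrac12 a b \sin\ta + \tfrac12 a d \sin\ta = \tfrac12 a(b+d)\sin\ta = \tfrac{rn}{2}\cdot\tfrac{\sin\ta\cdot r}{r}$; using $\sin\ta = t/r$ this is $\tfrac{nt}{2}$, i.e. $\tfrac{n}{2}\sqrt{r^2-s^2}$. On the other hand that same area equals the area of triangle $(c,e,f)$, which is of the form $(\text{rational})\cdot\sin\tau$.

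The key bookkeeping step is matching the normalizations. A $\tau$-congruent number $N$ requires a rational triangle of area $N\sqrt{r_\tau^2 - s_\tau^2}$ where $\cos\tau = s_\tau/r_\tau$. So I must show that the area $\tfrac{n}{2}\sqrt{r^2-s^2}$, rewritten using $\sin\ta = \sin\tau \cdot (\text{square}) \cdot$ (rational), equals $2nn' \sqrt{r_\tau^2-s_\tau^2} \bmod (\Q^*)^2$ after clearing square factors. The hypothesis $\sin\ta/\sin\tau \equiv n' \bmod (\Q^*)^2$ is precisely the input that makes $\sqrt{r^2-s^2}$ and $n'\sqrt{r_\tau^2-s_\tau^2}$ agree modulo squares (up to the rational factors $r, r_\tau$, which are absorbed by scaling the triangle, using Remark~\ref{rem1}(1)). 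Keeping track of the factor of $2$ — which comes from the area being $\tfrac12 ce\sin\tau$ versus the triangle area convention — is where one lands on $2nn'$ rather than $nn'$; this is the same phenomenon already visible in the $\ta=\pi/2$ case of \cite{Ochi}, where $\Nc(m)$ is tied to $2n$ being congruent.

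For the converse, given that $2nn'$ is a $\tau$-congruent number, I start from a rational $\tau$-triangle $(c,e,f)$ of the correct area and reverse the construction: the segment $f$, viewed as a diagonal, should be split by introducing the point at distance $a$ making the two sub-triangles have angles $\ta$ and $\pi-\ta$ at that point; solving for $a,b,d$ as rational functions of $c,e,f$ (and the fixed $r,s,t$) produces the envelope. The main obstacle I anticipate is \emph{positivity and rationality of $a$}: the reconstruction gives $a$ as a root of a quadratic (or as a rational expression whose positivity is not automatic), and one must verify that the ordering constraints $a \le b < c$ etc. implicit in the triangle conditions can be arranged — possibly by replacing the $\tau$-triangle by a suitable one in its rational-point orbit, or by the standard trick of scaling and relabeling. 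The special case $\tau = \ta$ (so $n' = 1$) then drops out immediately, giving the stated "$n$ is $\ta$-congruent $\iff 2n \in \Nc_\ta(\ta)$."
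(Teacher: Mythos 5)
Your proposal follows essentially the same route as the paper: in the forward direction you glue the two sub-triangles of the envelope along the common side $a$ to obtain the rational $\tau$-triangle $(c,e,b+d)$ and compare its area $\tfrac12 a(b+d)\sin\ta$ with $2nn'\sqrt{q^2-p^2}$ modulo squares (the paper does this after scaling $n$ to $4n$ to avoid halves), and in the converse you split a rational $\tau$-triangle for $2nn'$ by a cevian making angle $\ta$, exactly as in the paper's construction of the envelope $(x,y,a,c-y,b)_\ta^\tau$. The only point where you over-worry is the converse: no quadratic arises, since the cevian length is forced by the product condition $a(b+d)=rn$ (the paper takes $x=4rn/c$, hence rational) and the split point $y$ then comes out of a linear relation, with positivity and the remaining square-factor bookkeeping handled by scaling, just as you suggest.
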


\section{The $\ta$-congruent number elliptic curve and the set $\Nc_{\ta}$}
In this section, we provide an algebraically  necessary and sufficient condition for $n\in \N$ to be an element of $\Nc_\ta$ using the $\ta$-congruent number elliptic curves.
\begin{thm}
	\label{main1}
	A natural number $n$ belongs to $\Nc_\ta$ if and only if the  equations,
	\begin{equation}
	\label{eq5}
	\left\{
	\begin{array}{l}
	\hspace{-.1cm} E_\ta^w:  y^2=x(x+(r+s)w)(x-(r-s)w), \vspace{.2cm}\\
	\hspace{-.1cm} F_\ta^N:  v^2=u(u-(r+s)N)(u+(r-s)N), \vspace{.2cm}\\
	\hspace{-.1cm} C_R:   xv=uy,
	\end{array}
	\right.
	\end{equation}
	have a rational solution $(u,v,w,x,y)$ where $N=2n-w$,  $yv \not =0$ and $0 < w \leq n$.
\end{thm}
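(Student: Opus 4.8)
The plan is to translate the defining equations~\eqref{eq4} for a $\ta$-parallelogram envelope into the two $\ta$-congruent-number elliptic curves and a gluing relation, by separately analyzing the two triangle conditions.  First I would introduce the substitution $w = ab/(rn)\cdot n$, or more precisely set $w$ so that $ab = 2rw$ (so $w$ plays the role of a ``$\ta$-congruent value'' attached to the sub-triangle $(a,b,c)$) and correspondingly $N$ so that $ad = 2rN$; the normalization $a(b+d)=rn$ then forces $2w + 2N = 2n$, i.e. $N = 2n-w$.  Positivity of $a,b,d$ gives $0 < w$ and $0 < N$, hence $0 < w \le n$ after possibly swapping the roles of the two triangles (or invoking Remark~\ref{rem1}(2) to arrange $w \le N$; note $w \le n$ is what is claimed).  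The first equation $a^2+b^2-\tfrac{2s}{r}ab=c^2$ together with $ab=2rw$ is precisely the condition that $w$ is $\ta$-congruent via the $\ta$-triangle $(a,b,c)$, so by the standard correspondence (as in \cite{fujw1,fujw2,kan1}) it yields a rational point $(x,y)$ on $E_\ta^w$ with $y\neq 0$; explicitly one takes the classical change of variables $x = \tfrac{(r+s)w\, a}{\,?\,}$ etc. — I would simply cite the known birational map between rational $\ta$-triangles for $w$ and non-2-torsion points of $E_\ta^w$, recording the formulas for $x,y$ in terms of $a,b,c,w$.

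Next I would treat the second equation $a^2+d^2+\tfrac{2s}{r}ad=e^2$.  This is the analogue of a $\ta$-triangle relation but with $+\tfrac{2s}{r}$ in place of $-\tfrac{2s}{r}$, i.e. it is a $(\pi-\ta)$-triangle relation for the value $N$ with $ad=2rN$.  By Remark~\ref{rem1}(2) (or directly, replacing $s$ by $-s$ in the standard correspondence) this gives a rational point $(u,v)$ on the curve $y^2 = u\big(u+(r-s)N\big)\big(u-(r+s)N\big)$, which after reordering the roots is exactly $F_\ta^N\colon v^2 = u\big(u-(r+s)N\big)\big(u+(r-s)N\big)$, again with $v\neq 0$.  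The key point is that in both classical birational maps the auxiliary quantity $a$ appears the same way, so that $x/y$ and $u/v$ are each expressible through $a$ (times quantities determined by $w$, $N$, $s$, $r$).  Matching these two expressions for $a$ produces the single relation $xv = uy$, which is $C_R$.  Conversely, given a rational solution $(u,v,w,x,y)$ of~\eqref{eq5} with $yv\neq 0$ and $0<w\le n$, I would run the two birational maps backwards: from $(x,y)\in E_\ta^w$ recover a rational $\ta$-triangle $(a,b,c)$ with $ab=2rw$, from $(u,v)\in F_\ta^N$ recover positive rationals $(a',d,e)$ with $a'd = 2rN$ satisfying the $+$ relation, and then use $C_R$ precisely to force $a = a'$; finally $a(b+d) = ab/2 + ad/2$ wait — rather $ab+ad = 2rw + 2rN = 2r\cdot 2n$, hmm, so one normalizes (rescaling the envelope, which is allowed since we only need $n \bmod (\Q^*)^2$, cf. Remark~\ref{rem1}(1)) to get $a(b+d)=rn$, yielding $n\in\Nc_\ta$.

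I would present this as: (i) state the two classical correspondences as lemmas or cite them, being careful about the sign of $s$ and the ordering of the three roots so that one curve comes out as $E_\ta^w$ and the other as $F_\ta^N$; (ii) do the forward direction by writing the explicit formulas for $(x,y)$ and $(u,v)$ and checking $xv=uy$ by direct substitution; (iii) do the reverse direction by inverting and using $C_R$ to identify the two copies of $a$.  The main obstacle I anticipate is purely bookkeeping: getting the precise birational change of variables right so that the ``$+\tfrac{2s}{r}$'' triangle relation lands on the curve written as $F_\ta^N$ with that exact root ordering, and making sure the positivity/ordering constraints ($a\le b<c$ etc.) are compatible with $0<w\le n$ rather than merely $0<w<2n$ — this is where one must use that we may swap the two triangles (equivalently, that $(a,b,c,d,e)_\ta$ and $(a,d,e,b,c)_{\pi-\ta}$ carry the same information) to push $w$ into the lower half.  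None of the algebra is deep; the care is entirely in signs, root orders, and inequalities.
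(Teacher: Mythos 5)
Your overall route is essentially the paper's: handle the two sub-triangles separately through the classical dictionary between rational $\ta$-triangles (resp.\ $(\pi-\ta)$-triangles) and non-$2$-torsion rational points on $E_\ta^w$ (resp.\ $F_\ta^N$), and observe that the gluing relation $C_R:\ xv=uy$ encodes exactly the fact that the two triangles share the side $a$. This is precisely how the paper argues: its explicit correspondence has $a=|y/(2x)|=|v/(2u)|$, $b=|rwx/y|$, $c=|(x^2+(r^2-s^2)w^2)/(2y)|$, and symmetrically $d,e$ in terms of $(u,v,N)$, with inverse $x=2a\bigl(a+c-\tfrac{s}{r}b\bigr)$, $y=4a^2\bigl(a+c-\tfrac{s}{r}b\bigr)$, $w=\tfrac{2ab}{r}$, $u=2a\bigl(a+e+\tfrac{s}{r}d\bigr)$, $v=4a^2\bigl(a+e+\tfrac{s}{r}d\bigr)$.

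Two bookkeeping points in your write-up are genuinely off, however. First, your normalization $ab=2rw$, $ad=2rN$ is inconsistent with your claim that $a(b+d)=rn$ ``forces'' $N=2n-w$: it gives $2r(w+N)=rn$, i.e.\ $w+N=n/2$, not $2n$. The scaling that makes everything work with no rescaling in either direction is the paper's, $w=\tfrac{2ab}{r}$ and $N=\tfrac{2ad}{r}$, for which $w+N=\tfrac{2a(b+d)}{r}=2n$ exactly; your fallback of rescaling the envelope modulo squares can be made to work, but the slip in the forward direction should be repaired by choosing the correct $w$ rather than appealing to Remark~\ref{rem1}(1). Second, your device for arranging $0<w\le n$ --- swapping the two triangles via Remark~\ref{rem1}(2) --- does not do what you want: the swap turns a $\ta$-envelope into a $(\pi-\ta)$-envelope, i.e.\ it replaces $s$ by $-s$ and interchanges which triangle lands on $E$ and which on $F$, so it yields a solution of the analogous system for $\pi-\ta$ with parameters $(N,w)$, not a solution of \eqref{eq5} itself with the smaller parameter (for $\ta\neq\pi/2$ these are different systems). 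To be fair, the paper's own proof is silent on the constraint $0<w\le n$ and, as written, only produces $0<w<2n$; but the fix you propose would not close that point. Apart from these two items, writing out the explicit changes of variables as you indicate reproduces the paper's argument.
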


Taking $w=n$ in Theorem~\ref{main1} leads to the fact that if a natural number $n$ is simultaneously  $\ta$- and ($\pi-\ta$)-congruent number,
then  $n\in\Nc_\ta$.
The existence of a rational  solution $(u,v,w,x,y)$ in Theorem~\ref{main1} means that there exists a natural number $0 < w \leq n$
such that both the elliptic curves $E_\ta^w$ and $F_\ta^N$ have some non-$2$-torsion points $(x,y)$ and $(u,v)$  satisfying $x/y=u/v$, where the letter ``R''  in $C_R$ refers to the word ``Ratio".
We note that the ratio $b/d$ of a  $\ta$-parallelogram envelope $(a, b, c, d, e)_\ta$ for $n$ depends on the parameter $w$.
For $0 < w \leq n$, the point $P_w$ moves from the point $A_1$
to the middle point    $P_n$ of the side $A_1 A_2$, see Figure~3 below.
\begin{figure}[h]
		\begin{center}
		\includegraphics[height=.40\textheight,width=.50\textwidth]{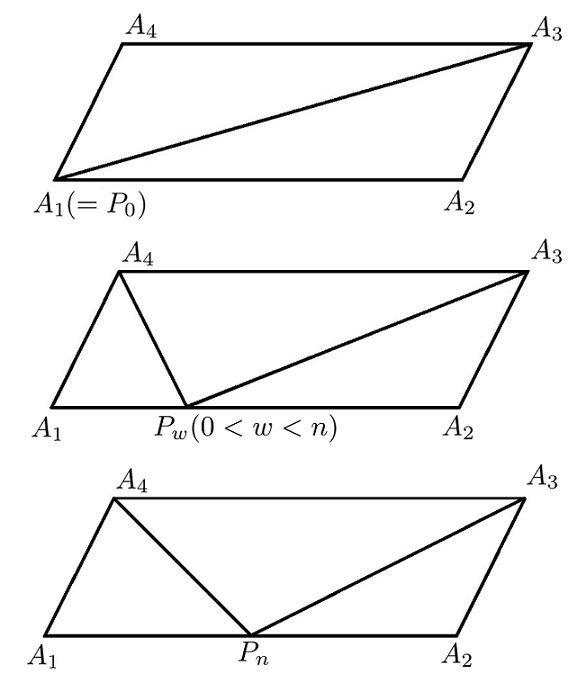}
		\caption{The role of $w$ in $\ta$-parallelogram envelope for $n$}
	\end{center}
\end{figure}

In order to reduce the number of variables in \eqref{eq5}, we let $v=uy/x$ in $F_\ta^N$  to obtain the following equivalence for Theorem~\ref{main1}.
\begin{thm}
	\label{main2}
	A natural number $n$ belongs to $\Nc_\ta$ if and only if the following equations,
	\begin{equation}
	\label{eq6}
	\left\{
	\begin{array}{l}
	\hspace{-.1cm} E_\ta^w:   y^2=x(x+(r+s)w)(x-(r-s)w), \vspace{.2cm}\\
	\hspace{-.1cm} G_\ta^N:  z^2= \left(x^2 + 2s(N+w)x - (r^2-s^2) w^2\right)^2+4(r^2-s^2)N^2 x^2,
	\end{array}
	\right.
	\end{equation}
	have a rational solution $(w,x,y,z)$ where $N=2n-w$, $y \not =0$, and $0 < w \leq n$.
\end{thm}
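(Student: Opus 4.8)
The plan is to derive Theorem~\ref{main2} from Theorem~\ref{main1} by proving that the two systems \eqref{eq5} and \eqref{eq6} are equivalent under their respective side conditions, the bridge being the substitution $v=uy/x$ already flagged in the text. Before starting I would record two elementary facts to be used repeatedly: on $E_\ta^w$ the requirement $y\neq 0$ forces $x\neq 0$ (the right-hand side vanishes at $x=0$); and under $0<w\leq n$ one has $N=2n-w\geq n>0$ while $r^2-s^2>0$ (since $|s|<r$), so $(r^2-s^2)N^2\neq 0$. I would also rewrite the equation of $E_\ta^w$, on the locus $x\neq 0$, as $y^2/x=x^2+2swx-(r^2-s^2)w^2$ by expanding $(x+(r+s)w)(x-(r-s)w)$; this is what turns $y^2$ into a polynomial in the coefficients below.

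For the implication \eqref{eq5}$\Rightarrow$\eqref{eq6} I would argue as follows. Given a solution $(u,v,w,x,y)$ of \eqref{eq5} with $yv\neq 0$, we get $x\neq 0$, hence $v=uy/x$ from $C_R$, and then $v\neq 0$ together with $y\neq 0$ forces $u\neq 0$. Plugging $v=uy/x$ into $F_\ta^N$, cancelling the common factor $u$, clearing denominators, and substituting $\mu=u/x$ produces a quadratic $x^2\mu^2-B\mu-(r^2-s^2)N^2=0$ with $B:=x^2+2s(N+w)x-(r^2-s^2)w^2$, where the recorded identity for $y^2/x$ is used to collapse the middle coefficient to $B$. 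Its discriminant is precisely $B^2+4(r^2-s^2)N^2x^2$, the right-hand side of $G_\ta^N$; since $\mu\in\Q$, the number $z:=2x^2\mu-B$ is rational and satisfies $z^2=B^2+4(r^2-s^2)N^2x^2$. Thus $(w,x,y,z)$ solves \eqref{eq6}, and the constraints $y\neq 0$ and $0<w\leq n$ are inherited verbatim.

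For the converse \eqref{eq6}$\Rightarrow$\eqref{eq5} I would run the construction backwards. From a solution $(w,x,y,z)$ of \eqref{eq6} with $y\neq 0$ (so $x\neq 0$), set $B$ as above, let $\mu:=(B+z)/(2x^2)$, and define $u:=x\mu$ and $v:=\mu y$, so that $C_R$ holds by design. Squaring $2x^2\mu-B=z$ and invoking $z^2=B^2+4(r^2-s^2)N^2x^2$ recovers $x^2\mu^2=B\mu+(r^2-s^2)N^2$; substituting this, together with the identity $B-2sNx=x^2+2swx-(r^2-s^2)w^2=y^2/x$, into the expanded product $u(u-(r+s)N)(u+(r-s)N)=x\mu\big(x^2\mu^2-2sNx\mu-(r^2-s^2)N^2\big)$ should collapse it to $\mu^2y^2=v^2$, giving $F_\ta^N$. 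It then remains to check $yv\neq 0$: since $(r^2-s^2)N^2\neq 0$, the quadratic has $\mu=0$ as a root only if its constant term vanishes, which it does not, so $\mu\neq 0$ and $v=\mu y\neq 0$. Hence $(u,v,w,x,y)$ solves \eqref{eq5} and $n\in\Nc_\ta$ by Theorem~\ref{main1}.

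I do not expect a genuine obstacle: this is essentially a change of variables plus completing the square in the auxiliary variable $\mu=u/x$. The two spots that demand a little care are (i) propagating the non-vanishing hypotheses correctly, so that replacing ``$yv\neq 0$'' by ``$y\neq 0$'' loses nothing (here one uses $x\neq 0$ for well-definedness of $\mu$, and $(r^2-s^2)N^2\neq 0$ for $v\neq 0$); and (ii) the clean identification of the discriminant of the $\mu$-quadratic with the degree-four polynomial in $x$ defining $G_\ta^N$, which crucially relies on substituting the Weierstrass equation of $E_\ta^w$ to eliminate $y^2$ in favour of $x^2+2swx-(r^2-s^2)w^2$.
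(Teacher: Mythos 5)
Your proposal is correct and follows essentially the same route as the paper: substitute $v=uy/x$ into $F_\ta^N$, use the Weierstrass equation of $E_\ta^w$ to eliminate $y^2$ and obtain a quadratic (in $u$, or equivalently your $\mu=u/x$) whose discriminant is exactly the quartic defining $G_\ta^N$, then invert via the quadratic formula for the converse. Your treatment is only slightly more explicit than the paper's in tracking the non-vanishing conditions ($x\neq 0$, $u\neq 0$, $v\neq 0$), which the paper leaves implicit.
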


The proofs of Theorems~\ref{main1} and \ref{main2} are included in Section~\ref{Pr1-2}

\section{The $\ta$-parallelogram envelopes with ratio $m$}
In this section, we are going to study $\Nc_{\ta, m}$ a  subset of  $\Nc_\ta$ associated to a given
positive rational number $m$.
\begin{deff}
	For a positive rational number $m$, let  $\Nc_{\ta, m}$ denote the subset of $\Nc_\ta$ consisting of
	all natural numbers $n$ such that there exist  positive rational numbers $a$, $b$, $c$, $d$, $e$
	satisfying~\eqref{eq4} and  $d=mb$.
	We call such a quintuple a {\it $\ta$-parallelogram envelope with ratio $m$ for $n$} and denote it by
	$(a,b,c,d,e)_{\ta}^m$, see Figure~4.
\end{deff}
\begin{figure}[h]
	\begin{center}
		\includegraphics[height=.1\textheight,width=.42\textwidth]{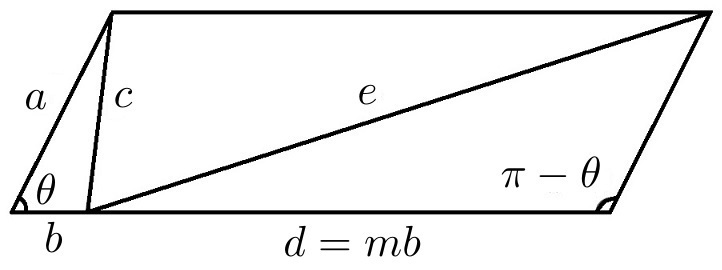}
		\caption{A $\ta$-parallelogram envelope with ratio $m$ for $n$}
	\end{center}
\end{figure}

For a positive integer $m$,
a natural number $n$ belongs to  $\Nc_{\ta, m}$  if and only if $nm$ is an element of  $\Nc_{\pi - \ta, m}$.
Indeed, if $(a, b, c, d, e)_{\ta}^m$ is a $\ta$-parallelogram envelope with ratio $m$ for $n$, then  we obtain a
$(\pi- \ta)$-parallelogram envelope with ratio $m$ as
$(d , a, e, ma, mc)_{\pi- \ta}^{m}$ for $mn$, by multiplying the first equation of \eqref{eq4} by $m^2$ and the third by $m$, see Figure~5.
\begin{figure}[h]
		\begin{center}
		\includegraphics[height=.37\textheight,width=.49\textwidth]{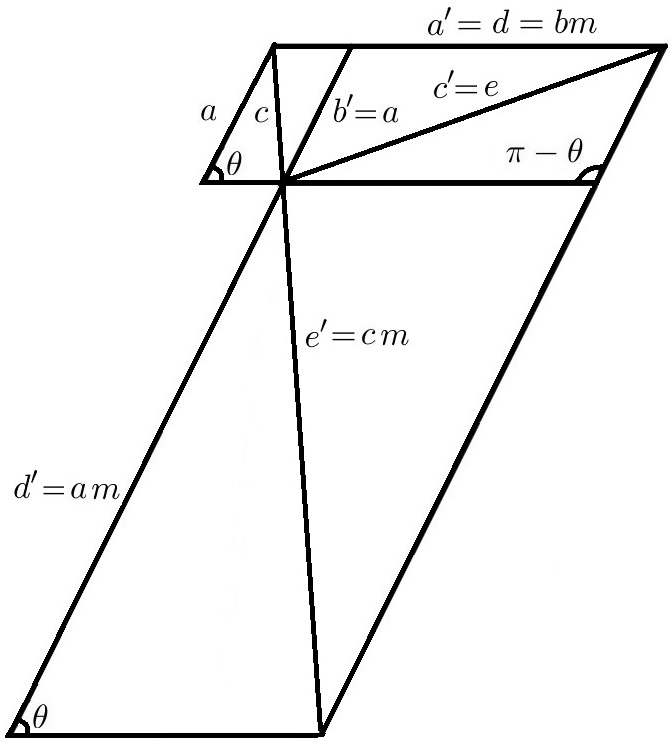}
		\caption{Two $\ta$-parallelogram envelopes with ratio $m$ for $n$  and $m n$}
	\end{center}
\end{figure}

The following theorem is a consequence of Theorems~\ref{main1} and \ref{main2}, which is proved in Section~\ref{Pr1-2}.
\begin{thm}
	\label{main3}
	A natural number $n$ belongs to $\Nc_{\ta, m}$ for some positive rational number $m$ if and only if
	the simultaneous equations,
	\begin{equation}
	\label{eq6a}
	\left\{
	\begin{array}{l}
	\hspace{-.1cm}  E_\ta^{(m,n)}:   \displaystyle y^2=x\left(x+\frac{2n(r+s)}{m+1}\right)\left(x-\frac{2n(r-s)}{m+1}\right),\vspace{.2cm}\\
	\hspace{-.1cm} G_\ta^{(m,n)}:  z^2= x^4+ b_1 x^3+ b_2 x^2+ b_3 x + b_4,
	\end{array}
	\right.
	\end{equation}
	have a rational solution $(x,y,z)$ with $y \not =0$, where
	$$
	\begin{array}{ll}
	b_1= 8 n s, & b_2 =\displaystyle\frac{8 n^2 (2 m^2r^2+4s^2 m + 3 s^2-r^2)}{(m+1)^2}, \vspace{.2cm} \\
	b_3  = -\displaystyle\frac{32 n^3 s (r^2-s^2) }{(m+1)^2}, & b_4   = \displaystyle\frac{16 n^4(r^2-s^2)^2}{(m+1)^4}.
	\end{array}
	$$
\end{thm}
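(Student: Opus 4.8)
The plan is to derive the stated equivalence from Theorem~\ref{main2} by eliminating the free parameter $w$ in favour of the ratio $m$. Fix a positive rational $m$. By definition $n\in\Nc_{\ta,m}$ exactly when there are positive rationals $a,b,c,d,e$ satisfying \eqref{eq4} together with $d=mb$; dropping the last constraint, Theorem~\ref{main2} says such a quintuple exists if and only if \eqref{eq6} has a rational solution $(w,x,y,z)$ with $y\ne0$ and $0<w\le n$, where $N=2n-w$. What has to be added is a reading of the dictionary underlying Theorems~\ref{main1} and \ref{main2}: the triangle $(a,b,c)$ (the one carrying $-\tfrac{2s}{r}ab$) is the $\ta$-triangle attached to $E_\ta^w$ and $(a,d,e)$ is the $(\pi-\ta)$-triangle attached to $F_\ta^N=E_{\pi-\ta}^N$, with $a(b+d)=rn$ distributed as $2ab=rw$, $2ad=rN=r(2n-w)$; in particular the ratio of the envelope is $d/b=N/w=(2n-w)/w$. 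Imposing $d=mb$ is therefore the same as $N=mw$, i.e.
\[
w=\frac{2n}{m+1},\qquad N=\frac{2mn}{m+1},\qquad N+w=2n .
\]

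The second step is to substitute this value of $w$ into \eqref{eq6}. The curve $E_\ta^w$ becomes $E_\ta^{(m,n)}$ verbatim. For the other equation, using $N+w=2n$, $w^2=\tfrac{4n^2}{(m+1)^2}$ and $N^2=\tfrac{4m^2n^2}{(m+1)^2}$, the curve $G_\ta^N$ becomes
\[
z^2=\Bigl(x^2+4sn\,x-\tfrac{4n^2(r^2-s^2)}{(m+1)^2}\Bigr)^2+\tfrac{16m^2n^2(r^2-s^2)}{(m+1)^2}\,x^2 ,
\]
and expanding the square and collecting powers of $x$ yields $z^2=x^4+b_1x^3+b_2x^2+b_3x+b_4$ with precisely the coefficients $b_1,\dots,b_4$ of the statement; the only not-quite-immediate simplification is that the coefficient of $x^2$ equals $\tfrac{8n^2}{(m+1)^2}\bigl(2s^2(m+1)^2+(r^2-s^2)(2m^2-1)\bigr)=\tfrac{8n^2(2m^2r^2+4s^2m+3s^2-r^2)}{(m+1)^2}$. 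Thus \eqref{eq6a} is nothing but the slice $w=\tfrac{2n}{m+1}$ of \eqref{eq6}; establishing this identification is the whole substance of the proof, and it is a routine, if mildly lengthy, polynomial computation.

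Finally one must handle the range condition $0<w\le n$ of Theorem~\ref{main2}, which under $w=\tfrac{2n}{m+1}$ reads $m\ge1$. For $m\ge1$ the assertion is then literally the $w=\tfrac{2n}{m+1}$ instance of Theorem~\ref{main2}, once one records that the envelope reconstructed from a solution of \eqref{eq6a} satisfies $d=mb$ because $d/b=N/w=m$. For $0<m<1$ the same construction applies unchanged: the correspondence between an envelope $(a,b,c,d,e)_\ta^m$ and a rational point of \eqref{eq6} only uses the positivity of $ab=\tfrac{rw}{2}$ and $ad=\tfrac{rN}{2}$, i.e. $0<w<2n$, which holds for $w=\tfrac{2n}{m+1}$ for every positive rational $m$, the bound $w\le n$ in Theorem~\ref{main2} being a normalization with no effect here. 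I expect the main difficulty---shallow rather than deep---to be pinning down the exact normalization of the $w\leftrightarrow$ envelope dictionary (the signs, and the factor making $2ab=rw$), so that $E_\ta^w$ lands on $E_\ta^{(m,n)}$ on the nose and the quartic acquires exactly $b_1,\dots,b_4$; with that fixed, everything else is the bookkeeping sketched above.
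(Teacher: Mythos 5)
Your proposal is correct and follows essentially the same route as the paper: impose $d=mb$ in the correspondence behind Theorems~\ref{main1} and \ref{main2} to force $w=\frac{2n}{m+1}$, $N=\frac{2mn}{m+1}$, and then substitute into \eqref{eq6} (i.e.\ simplify \eqref{eq8}) to land exactly on $E_\ta^{(m,n)}$ and the quartic with $b_1,\dots,b_4$. Your explicit check of the $x^2$-coefficient and your remark that the normalization $0<w\le n$ (equivalently $m\ge 1$) is harmless for $0<m<1$ are details the paper passes over silently, but they do not change the argument.
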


\section{An elliptic curve related to $\Nc_{\ta, m}$}
\label{ellGtm}
In order to investigate  the set of rational points satisfying Equations~\eqref{eq6a},  we prove
the following  two results on the quartic  curve $G_\ta^{(m,n)}$ in
Section~\ref{main4-5}.
\begin{thm}
	\label{main4}
	The quartic curve  $G_\ta^{(m,n)}$ in the statement of  Theorem~\ref{main3} can be birationally transformed into the elliptic curve given by the following cubic equation,
	\begin{equation*}
	\label{eq:gtam}
	{\mathcal G}_\ta^m: \ {\mathcal Y}^2={\mathcal X}^3+(r^2m^2+2s^2m+r^2){\mathcal X}^2+m^2(r^2-s^2)^2{\mathcal X}.
	\end{equation*}
	Moreover, for  $\ta \in (0,\pi/2]$ with rational cosine, the  rank  $r_\ta(m)$ of the Mordell-Weil group of rational points on
	${\mathcal G}_\ta^m$ is at least one for all but finitely many $m \in \Q$ with an independent point given by
	$$\Pc=(-(r^2-s^2) m^2, s (r^2-s^2) m^2  (m+1)).$$
\end{thm}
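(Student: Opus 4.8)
I will address the two assertions of Theorem~\ref{main4} separately, beginning with the birational transformation; the plan there is first to strip the parameter $n$ out of $G_\ta^{(m,n)}$ and then to run a classical construction. Recalling from the proof of Theorem~\ref{main3} that the parametrization used there corresponds to $w=\tfrac{2n}{m+1}$ and $N=2n-w=\tfrac{2nm}{m+1}$ (so that $N+w=2n$ and $N=mw$), the equation of $G_\ta^{(m,n)}$ takes the completed-square shape
\[
z^2=\Bigl(x^2+4nsx-\tfrac{4n^2(r^2-s^2)}{(m+1)^2}\Bigr)^{2}+\tfrac{16n^2m^2(r^2-s^2)}{(m+1)^2}\,x^2,
\]
and the substitution $x=\tfrac{2n}{m+1}X$, $z=\tfrac{4n^2}{(m+1)^2}Z$ turns this into the $n$-free quartic
\[
Z^2=\bigl(X^2+2s(m+1)X-(r^2-s^2)\bigr)^{2}+4m^2(r^2-s^2)X^2 .
\]
Its leading coefficient equals $1$ and its constant term equals $(r^2-s^2)^2$, both perfect squares, so the curve has the rational points $(0,\pm(r^2-s^2))$ together with the two rational points at infinity, and one may apply the classical recipe that converts a genus-one quartic with a rational point into a Weierstrass cubic: writing $Z=X^2+2s(m+1)X+t$ reduces the equation to the requirement that a quadratic in $X$ with coefficients polynomial in $t$ have a rational root, i.e.\ that its discriminant be a square. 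That discriminant factors as $8\bigl(t+(r^2-s^2)\bigr)$ times a quadratic in $t$, so the linear factor contributes a rational $2$-torsion point at $t=-(r^2-s^2)$; translating this point to the origin and rescaling by a universal constant then produces exactly ${\mathcal G}_\ta^m$ (which indeed has $(0,0)$ as a rational $2$-torsion point). This step is mechanical; the only care needed is to hit the precise model ${\mathcal G}_\ta^m$ rather than a twist or a rescaled version of it.

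For the point of infinite order, a direct substitution first shows that $\Pc$ lies on ${\mathcal G}_\ta^m$ — both sides of the defining equation become $s^2(r^2-s^2)^2m^4(m+1)^2$ at $\Pc$. The substance is to show that, viewed over $\Q(m)$, the point $\Pc$ has infinite order. For this I would localize at $m=\infty$: setting $t=1/m$ and passing to the integral model
\[
{\mathcal Y}^2={\mathcal X}^3+(r^2+2s^2t+r^2t^2){\mathcal X}^2+(r^2-s^2)^2t^2{\mathcal X},
\]
the fibre at $t=0$ is the nodal cubic ${\mathcal Y}^2={\mathcal X}^2({\mathcal X}+r^2)$, whose node has the rational tangent slopes $\pm r$; hence ${\mathcal G}_\ta^m$ has \emph{split} multiplicative reduction there, with identity component of the special fibre isomorphic to $\mathbb{G}_m$. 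In this model $\Pc$ becomes $\bigl(-(r^2-s^2),\,s(r^2-s^2)(1+t)\bigr)$, and at $t=0$ it reduces to the smooth point $\bigl(-(r^2-s^2),\,s(r^2-s^2)\bigr)$, i.e.\ to the element $-\tfrac{r+s}{r-s}$ of $\mathbb{G}_m(\Q)=\Q^\times$. If $\Pc$ were torsion over $\Q(m)$ this element would be a root of unity in $\Q^\times$, hence $\pm1$, which forces $s=0$; so for $s\neq0$ the point $\Pc$ has infinite order over $\Q(m)$. Since ${\mathcal G}_\ta^m$ is a non-isotrivial family (its $j$-invariant is a non-constant function of $m$), Silverman's specialization theorem then gives that the specialization map ${\mathcal G}_\ta^m(\Q(m))\to{\mathcal G}_\ta^{m_0}(\Q)$ is injective for all but finitely many $m_0\in\Q$, and for every such $m_0$ the image of $\Pc$ has infinite order, whence $r_\ta(m_0)\ge1$.

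The principal obstacle is the uniformity of this last argument in the angle $\ta$: the reduction-at-infinity computation works for every $\ta$ with $s\neq0$, but it collapses exactly at $\ta=\pi/2$, where $\Pc=(-r^2m^2,0)$ is a $2$-torsion point of ${\mathcal G}_\ta^m$ and so says nothing about the rank. I would therefore prove the rank half of Theorem~\ref{main4} for $\ta\in(0,\pi/2)$ and treat $\ta=\pi/2$ — essentially Ochiai's setting — separately, with a different choice of point. The finitely many $m$ at which ${\mathcal G}_\ta^m$ degenerates, namely $m\in\{0,-1\}$ and the roots of $r^2(m-1)^2+4s^2m$, are harmless: they are absorbed into the exceptional set.
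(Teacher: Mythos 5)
Your proposal is essentially correct but takes a genuinely different route from the paper's, and two loose ends deserve mention. For the birational transformation the paper does not run a reduction algorithm at all: it simply exhibits the explicit maps \eqref{trans1} (together with their inverses) between $G_\ta^{(m,n)}$ and $\Gg_\ta^m$ and leaves the verification to the reader. Your route --- rescaling $x,z$ by powers of $2n/(m+1)$ to eliminate $n$ (your completed-square form and the resulting $n$-free quartic are correct), then the classical reduction of a monic quartic with a rational point via $Z=X^2+2s(m+1)X+t$ and the discriminant cubic in $t$ --- is sound, but you stop at ``mechanical'': since the theorem names a specific model $\Gg_\ta^m$, and the later arguments (e.g.\ the proof of Theorem~\ref{main6}) use the explicit formulas \eqref{trans1} to pull points back to $G_\ta^{(m,n)}$, that final change of variables really has to be written out rather than asserted. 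For the rank statement the comparison goes the other way: the paper's proof is a computational check (SAGE verification that $[k]\Pc\neq\infty$ for $2\le k\le 16$, plus Mazur), with the passage to ``all but finitely many $m\in\Q$'' left implicit, whereas your argument --- split multiplicative reduction of the $\Q(m)$-model at $m=\infty$, the reduction of $\Pc$ landing at $-\tfrac{r+s}{r-s}\in\mathbb{G}_m(\Q)$, which is not a root of unity unless $s=0$, followed by non-isotriviality and Silverman's specialization theorem --- is more conceptual and in fact supplies exactly the step the paper glosses over. Finally, your observation about $\ta=\pi/2$ is correct and points to a defect in the theorem as stated rather than in your argument: for $s=0$ the point $\Pc=(-r^2m^2,0)$ is $2$-torsion, so it cannot be an independent point, and the paper's own criterion ($[2]\Pc\neq\infty$) fails there too; your restriction to $s\neq0$ therefore matches what either method actually proves, though the promised separate treatment of $\ta=\pi/2$ is supplied neither by you nor by the paper.
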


Given  $\ta \in (0,\pi/2]$ with $\cos(\ta)=s/r \in \Q^*$, we
define the following quantities:
\begin{equation}
\label{Ms}
\begin{array}{l}
M_0=m(r^2-s^2),\vspace{.2cm}\\
M_1=r(m+r)\left(r(m+1)+2\sqrt{M_0}\right),\vspace{.2cm}\\
M_2=r(m+r)\left(r(m+1)-2\sqrt{M_0}\right).
\end{array}
\end{equation}
We note that $M_0$, $M_1$, and $M_2$  are all positive numbers.
Evidently $M_0,$ and $M_1>0$, since $0\leq |s|<r$ and  $m>0$. To show $M_2>0$, it is enough to observe
$$rm+r-2\sqrt{M_0}>0\Longleftrightarrow r^2(m+1)^2>4m(r^2-s^2)\Longleftrightarrow r^2(m-1)^2+4ms^2>0$$
which is the case.

In the next  result, we determine  all possibilities for
${\mathcal G_\ta^m}(\Bbb Q)_{\text{tors}}$, i.e., the torsion subgroup of  $\mathcal G_\ta^m$.
\begin{thm}
	\label{main5}
	Keeping the above notations, we have:
	$${\mathcal G_\ta^m}(\Bbb Q)_{\rm{tors}}\cong
	\left\{\begin{array}{ll}
	\hspace{-.1cm}\displaystyle\frac{\Bbb Z}{8\Bbb Z} & \text{if~}\sqrt{M_0}~\text{and either}~\sqrt{M_1}~\text{or}~\sqrt{M_2} \in \Q^*
	\vspace{.2cm}\\
	\hspace{-.1cm}	\displaystyle\frac{\Bbb Z}{2\Bbb Z}\times\frac{\Bbb Z}{8\Bbb Z} & \text{if~}\sqrt{M_0}, \sqrt{M_1},\text{and}~\sqrt{M_2} \in \Q^*
	\vspace{.2cm}\\
	\hspace{-.1cm}	\displaystyle\frac{\Bbb Z}{4\Bbb Z}~\text{or}~\frac{\Bbb Z}{2\Bbb Z}\times\frac{\Bbb Z}{4\Bbb Z} & \text{otherwise}.
	\end{array}\right.$$
	
	The points $(M_0, \pm r(m+1)M_0)$ are of order~$4$ and   points with the following
$\X$-coordinates
	\begin{equation*}
	\label{octic}
	\begin{array}{l}
	{\mathcal X}_1=M_0+\left(r(m+1)+\sqrt{M_1}\right)\sqrt{M_0},\vspace{.2cm} \\
	{\mathcal X}_2=M_0+\left(r(m+1) -\sqrt{M_1}\right)\sqrt{M_0},\vspace{.2cm}\\
	{\mathcal X}_3=M_0-\left(r(m+1)+\sqrt{M_2}\right)\sqrt{M_0}, \vspace{.2cm}\\
	{\mathcal X}_4=M_0-\left(r(m+1)-\sqrt{M_2}\right)\sqrt{M_0},
	\end{array}
	\end{equation*}
	are of order~$8$ in the torsion subgroup of ${\mathcal G_\ta^m}$.
	
	Moreover, for  $\ta\neq \pi/2$, the cases $\Bbb Z/4\Bbb Z$, $\Bbb Z/8\Bbb Z$,
	$\Bbb Z/2\Bbb Z\times\Bbb Z/4\Bbb Z$ happen for  infinitely many  $m$, but  the case
	$\Bbb Z/2\Bbb Z\times\Bbb Z/8\Bbb Z$ occurs only for finitely many $m$.
\end{thm}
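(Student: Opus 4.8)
The plan is to exploit the special shape of the curve. Writing $A:=r^2m^2+2s^2m+r^2$ and recalling $M_0=m(r^2-s^2)$, we have $\mathcal G_\ta^m:\mathcal Y^2=\mathcal X(\mathcal X^2+A\mathcal X+M_0^2)$, whose quadratic factor has the perfect square $M_0^2$ as constant term; such curves always carry a rational point of order $4$. Concretely, the identity $A+2M_0=r^2(m+1)^2$ (this is exactly where the rationality of $\cos\ta$ enters) gives $M_0\bigl(M_0^2+AM_0+M_0^2\bigr)=M_0^2r^2(m+1)^2$, so $\bigl(M_0,\pm r(m+1)M_0\bigr)\in\mathcal G_\ta^m(\Q)$, and the duplication formula yields $\mathcal X\bigl(2\cdot(M_0,\pm r(m+1)M_0)\bigr)=\bigl((M_0^2-M_0^2)/2y\bigr)^2=0$, so this point doubles to $(0,0)$ and has order exactly $4$. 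Hence $\Z/4\Z\hookrightarrow\mathcal G_\ta^m(\Q)_{\mathrm{tors}}$ for every admissible $m$. Combining this with Mazur's theorem on rational torsion, together with the verification — after the substitution $\mathcal X=M_0\mathcal T$ — that the $3$-division polynomial $3\mathcal X^4+4A\mathcal X^3+6M_0^2\mathcal X^2-M_0^4$ has no rational root, the torsion group must be one of $\Z/4\Z$, $\Z/8\Z$, $\Z/2\Z\times\Z/4\Z$, $\Z/2\Z\times\Z/8\Z$.

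It then remains to detect full rational $2$-torsion and a rational point of order $8$ in terms of $M_0,M_1,M_2$. Full $2$-torsion holds iff $\mathcal X^2+A\mathcal X+M_0^2$ splits over $\Q$, i.e. iff $A^2-4M_0^2\in(\Q^*)^2$; and the identity $A^2-4M_0^2=r^2(m+1)^2\bigl(r^2(m-1)^2+4ms^2\bigr)$ reduces this to $\delta:=r^2(m-1)^2+4ms^2$ being a square, equivalently — since $M_1M_2$ equals $\delta$ times a perfect square — to $\sqrt{M_1M_2}\in\Q^*$. For $8$-torsion I would apply the classical halving test to the order-$4$ point $P=\bigl(M_0,r(m+1)M_0\bigr)$: here $x(P)^2+Ax(P)+M_0^2=M_0r^2(m+1)^2$, so the test first forces $M_0\in(\Q^*)^2$, and then the two candidate values $\sigma=2\bigl(M_0\pm r(m+1)\sqrt{M_0}\bigr)$ reduce divisibility by $2$ to the requirement that one of $r(m+1)\bigl(r(m+1)\pm2\sqrt{M_0}\bigr)$ — that is, one of $M_1,M_2$ — be a square in $\Q$; the resulting halved points have precisely the $\mathcal X$-coordinates $\mathcal X_1,\dots,\mathcal X_4$ of the statement, each of order $8$. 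Assembling: if $M_0,M_1,M_2$ are all squares there is a point of order $8$ and full $2$-torsion, hence $\Z/2\Z\times\Z/8\Z$; if $M_0$ and exactly one of $M_1,M_2$ is a square there is an order-$8$ point but no full $2$-torsion (full $2$-torsion would make $M_1M_2$, and therefore both $M_1$ and $M_2$, squares), hence $\Z/8\Z$; otherwise there is no order-$8$ point, leaving $\Z/4\Z$ or $\Z/2\Z\times\Z/4\Z$ according as $\delta$ is a nonsquare or a square.

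For the last assertion, take $\ta\neq\pi/2$, so $s\neq0$ (when $\ta=\pi/2$ one has $\delta=(m-1)^2$ always and no $8$-torsion ever, so only $\Z/2\Z\times\Z/4\Z$ arises, which is why the hypothesis appears). The locus $\{\delta\in(\Q^*)^2\}$ is the conic $z^2=r^2m^2+(4s^2-2r^2)m+r^2$, which contains $(m,z)=(1,2s)$ and is therefore $\Q$-rational; along its $\Pp^1$-parametrization, removing the single square class $m\in(r^2-s^2)(\Q^*)^2$ on which $M_0$ becomes a square still leaves infinitely many $m$ with full $2$-torsion and no $8$-torsion, giving $\Z/2\Z\times\Z/4\Z$ for infinitely many $m$; and removing from $\Q$ the density-zero set of $m$ for which $\delta$ or $M_0$ is a square leaves infinitely many $m$ with torsion exactly $\Z/4\Z$. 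For the remaining two cases I would parametrize the square class where $M_0$ is a square, express $m$ through the corresponding parameter, and study the induced conditions on $M_1$ and $M_2$: the locus where $M_0$ and (say) $M_2$ are both squares is cut out by a curve of genus at most $1$ which one checks carries infinitely many rational points — via an explicit rational parametrization, or a point of infinite order built from $\Pc$ of Theorem~\ref{main4} — whereas the locus where $M_0,M_1,M_2$ are all simultaneously squares is cut out by a further square condition and is, for $s\neq0$, a curve of genus $\geq2$. By Faltings' theorem the latter has only finitely many rational points, so at most finitely many $m$ yield $\Z/2\Z\times\Z/8\Z$, while removing it from the former leaves infinitely many $m$ with torsion $\Z/8\Z$. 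The step I expect to be the real obstacle is precisely this last pair of geometric facts — that the double-square locus has infinitely many rational points and the triple-square locus has genus $\geq2$ exactly when $\ta\neq\pi/2$ — together with exhibiting the explicit rational family behind the infinitely many $\Z/8\Z$-fibres; the torsion bookkeeping, the halving identities, and the conic parametrization are otherwise routine.
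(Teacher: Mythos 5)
Your torsion classification is in substance the paper's own argument: the same order-$4$ points $(M_0,\pm r(m+1)M_0)$ coming from the identity $A+2M_0=r^2(m+1)^2$, Mazur's theorem plus the (asserted rather than proved, in both treatments) absence of rational $3$-torsion, and the order-$8$ criterion, which you obtain by halving the order-$4$ point instead of solving $\X([2]\Tc)=M_0$ as the paper does; the two computations are equivalent and give the same $\X_1,\dots,\X_4$. (You correctly work with $M_1,M_2=r(m+1)\bigl(r(m+1)\pm2\sqrt{M_0}\bigr)$; the factor $m+r$ in \eqref{Ms} is a typo, as the paper's own examples confirm.) Your bookkeeping via $\delta=r^2(m-1)^2+4ms^2$, with $A^2-4M_0^2=r^2(m+1)^2\delta$ and $M_1M_2=r^2(m+1)^2\delta$, is a clean way to separate $\Z/4\Z$ from $\Z/2\Z\times\Z/4\Z$ and to see that exactly one of $M_1,M_2$ being a square excludes full $2$-torsion; up to this point the proposal is sound.

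The genuine gap is the final assertion, and you flag it yourself. You reduce ``$\Z/8\Z$ for infinitely many $m$, $\Z/2\Z\times\Z/8\Z$ for only finitely many $m$'' to two unverified geometric claims: that on the locus $M_0=m_0^2$ (i.e. $m=m_0^2/(r^2-s^2)$) each condition $\sqrt{M_i}\in\Q$ cuts out a genus-one curve with infinitely many rational points, and that the simultaneous condition cuts out a curve of genus at least $2$. That verification is exactly the substance of the paper's proof: it writes the two conditions as the quartics $m_i^2=r(m_0^2+r^2-s^2)\bigl(r(m_0^2+r^2-s^2)\pm2m_0(r^2-s^2)\bigr)$, exhibits the explicit birational model $E_0:Y^2=X^3-108r^2(r^2-s^2)^2(r^2+3s^2)X+432r^4(r^2-s^2)^3(r^2-9s^2)$, determines its torsion $\Z/2\Z\times\Z/2\Z$, and produces the explicit point of infinite order $Q=\bigl(-3(r^2-s^2)(r^2+3s^2),\,27(r^2-s^2)^3\bigr)$, whose multiples yield infinitely many admissible $m$ carrying an order-$8$ point, while only torsion of $E_0$ yields $\sqrt{M_1}$ and $\sqrt{M_2}$ rational simultaneously. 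Your proposed shortcuts do not supply this: a point of infinite order ``built from $\Pc$ of Theorem~\ref{main4}'' cannot work, since $\Pc$ lies on $\Gg_\ta^m$ for a fixed $m$ and not on the parameter curve, and no genus computation for the triple-square locus is offered. Your aside motivating the hypothesis $\ta\neq\pi/2$ is also wrong: for $s=0$ one has full $2$-torsion for every $m$, and whenever $m$ and $m+1$ are both rational squares (e.g. $m=9/16$, an infinite family) all of $M_0,M_1,M_2$ are squares, so $\Z/2\Z\times\Z/8\Z$ occurs for infinitely many $m$; it is the failure of the finiteness claim at $\ta=\pi/2$, not the absence of $8$-torsion there, that forces the hypothesis.
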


The following table shows examples for the torsion subgroups of $\mathcal G_\ta^m(\Bbb Q)$ respect to the conditions in Theorem~\ref{main5}.
We  refer the reader to  see  Table~\ref{tab:data} in the Appendix for more data on the rank and cardinal number of the torsion subgroup of $\mathcal G_\ta^m$ for some $0 < m < 1$.

The discriminant and $j$-invariant of the elliptic curve ${\mathcal G}_\ta^m$ are given respectively as follows:
\begin{align*}
\Delta_\ta^m &=16 r^2(r^2 - s^2)^4 (m + 1)^2 m^4\left(r^2m^2+2(2s^2-r^2)m+r^2\right),\\
j_\ta^m &=256\frac{\left(r^4m^4+4r^2s^2m^3+(s^4+6r^2s^2-r^4)m^2+4r^2s^2m+r^4\right)^3}{(m + 1)^2(r^2 - s^2)^4\left(r^2m^2+2(2s^2-r^2)m+r^2\right)m^4r^2}.
\end{align*}
It is  easy to check the following isomorphisms over rational numbers,
$${\mathcal G}_\ta^m\cong {\mathcal G}_\ta^{1/m}, \quad \text{and}\quad
{\mathcal G}_\ta^{-m}\cong {\mathcal G}_\ta^{-1/m}.$$
Thus,  we may assume the elliptic curves ${\mathcal G}_\ta^m$ with
$m\geq 1$ in the rest of paper.

We end this section by remarking that
${\mathcal{G}}_{{\pi}/{2}}^m$ is isomorphic to the elliptic curve
$$E(m): U^2=X(X-m^2)(X-(m-1)(m+1)),$$
by changing the variables $\mathcal X=X-m^2$ and  $\mathcal Y=U$, which is  given and studied in  \cite{Ochi}.
\begin{table}[htbp]
\caption{Examples for the torsion subgroups of $\mathcal G_\ta^m(\Bbb Q)$ in Theorem~\ref{main5}}
\label{tab:1}
		\begin{tabular}{llll}
			\toprule
			& $\{r,s,m\}$ & $\{\sqrt{M_0},\sqrt{M_1},\sqrt{M_2}\}$ & ${\mathcal G_\ta^m}(\Bbb Q)_{\text{tors}}$ \\ \midrule
			& $\{2,1,2\}$ & $\{\sqrt{6},2\sqrt{9+3\sqrt{6}},2\sqrt{9-3\sqrt{6}}\}$ & $\Bbb Z/4\Bbb Z$  \vspace{.2cm}\\
			& $\{2,1,3\}$ & $\{3,4\sqrt{7},4\}$ & $\Bbb Z/8\Bbb Z$ \vspace{.2cm}\\
			& $\{2,1,1\}$ & $\{\sqrt{3},2\sqrt{3}+2,2\sqrt{3}-2\}$ & $\Bbb Z/2\Bbb Z\times \Bbb Z/4\Bbb Z$ \vspace{.2cm}\\
			& $\{25,7,1\}$ & $\{24,70,10\}$ & $\Bbb Z/2\Bbb Z\times \Bbb Z/8\Bbb Z$ \\ \bottomrule
		\end{tabular}
\end{table}

\section{The main results related to  $\Nc_{\ta, m}$ and $\Nc_\ta$}
\label{Mainres}
Given  $\ta \in (0,\pi/2]$  with $\cos(\ta)=s/r \in \Q$ and  a rational number $m\geq 1$, we have the following results on $\Nc_{\ta, m}$.
\begin{thm}
	\label{main6}
	\begin{itemize}
		\item[(i)] If $\sqrt{m} \sin(\ta) $ and $\sqrt{m+1}\in \Q^*$, then
		$ \sqrt{m(r^2-s^2)} \ \bmod \ (\Q^*)^2$ belongs to $\Nc_{\ta, m}$,
		and some  points of   order~$8$  in  $\Gg_{\ta}^m(\Q)$ give some $\ta$-parallelogram envelope with ratio $m$.
		\item[(ii)] Otherwise, there exists a $\ta$-parallelogram envelope
		$(a, b, c, d, e)_\ta^m$	for  $n \in \Nc_{\ta, m}$
		if and only if  $ r_\ta(m) \geq 1.$
	\end{itemize}	
\end{thm}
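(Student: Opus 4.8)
The plan is to translate ``there is a $\ta$-parallelogram envelope with ratio $m$'' into a statement about the rational points of $\Gg_\ta^m$, and then to read the answer off Theorem~\ref{main5} (for the torsion) and off $r_\ta(m)$ (for the free part). By Theorem~\ref{main3}, $n\in\Nc_{\ta,m}$ iff the system \eqref{eq6a} has a rational solution $(x,y,z)$ with $y\neq0$. Setting $x=nX$, $z=n^{2}Z$ rescales $G_\ta^{(m,n)}$ to a quartic $\widetilde G$ not involving $n$, and turns $E_\ta^{(m,n)}$ into $y^{2}=n^{3}g(X)$ with $g(X)=X\left(X+\frac{2(r+s)}{m+1}\right)\left(X-\frac{2(r-s)}{m+1}\right)$. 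Since $n^{3}g(X)$ is a square iff $n\,g(X)$ is, and since $\Nc_{\ta,m}$ is a union of square classes by Remark~\ref{rem1}(1), this yields the reformulation: $n\in\Nc_{\ta,m}$ iff $\widetilde G$ has a rational point with $g(X)>0$ and $g(X)\equiv n\bmod(\Q^{*})^{2}$; and conversely every rational point of $\widetilde G$ with $g(X)>0$ produces, with $n$ the squarefree part of $g(X)$, an element of $\Nc_{\ta,m}$ together with an explicit envelope with ratio $m$, obtained by running the substitutions of Theorems~\ref{main3} and \ref{main4} backwards. By Theorem~\ref{main4}, $\widetilde G$ is birational over $\Q$ to $\Gg_\ta^m$, so from now on $X$, hence $g(X)$, is a rational function of $(\X,\Y)$ on $\Gg_\ta^m$, and the task is to decide for which rational points $g>0$.

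If $r_\ta(m)\ge1$, choose a point $P$ of infinite order — for all but finitely many $m$ the point $\Pc$ of Theorem~\ref{main4} serves, and otherwise any generator of a nontrivial summand of the Mordell--Weil group. Its multiples $kP$ realise infinitely many distinct values of $X$, and one checks that infinitely many of them satisfy $g(X)>0$ (the region $g(X)>0$ is open and nonempty in $\Gg_\ta^m(\R)$, and the multiples of a point of infinite order are dense in the component of $\Gg_\ta^m(\R)$ meeting it); each such point produces a $\ta$-parallelogram envelope with ratio $m$. This is the ``if'' direction of (ii), and it shows that positive rank always forces $\Nc_{\ta,m}\neq\varnothing$.

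It remains to determine what the finitely many torsion points contribute. Using the description in Theorem~\ref{main5}, one transports each torsion point — the $2$-torsion (at $\X=0$, plus the roots of $\X^{2}+(r^{2}m^{2}+2s^{2}m+r^{2})\X+m^{2}(r^{2}-s^{2})^{2}$ when rational), the order-$4$ points at $\X=M_{0}$, and the order-$8$ points at $\X\in\{\X_{1},\X_{2},\X_{3},\X_{4}\}$ when rational — through the birational map of Theorem~\ref{main4} and evaluates $g$. The key point, and the substance of the dichotomy, is to establish that at every torsion point of order $\le4$ the value $g(X)$ either vanishes (so $y=0$, which is excluded) or is $\le0$, hence yields no envelope; whereas under the hypotheses of (i) — $\sqrt m\sin\ta$ and $\sqrt{m+1}$ in $\Q^{*}$, i.e.\ $M_{0}=m(r^{2}-s^{2})$ and $m+1$ rational squares — a rational order-$8$ point exists and $g$ at the corresponding $X$ is a positive rational lying in the square class of $\sqrt{M_{0}}=\sqrt{m(r^{2}-s^{2})}$. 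Granting this, the two cases follow: in case~(ii) every torsion point is degenerate, so an envelope with ratio $m$ exists iff $\Gg_\ta^m$ has a point of infinite order, i.e.\ iff $r_\ta(m)\ge1$; in case~(i), $\sqrt{m(r^{2}-s^{2})}\bmod(\Q^{*})^{2}\in\Nc_{\ta,m}$, witnessed by an order-$8$ point of $\Gg_\ta^m$, and the envelope is written out explicitly in terms of $M_{0}$, $M_{1}$ (or $M_{2}$) and $m$.

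The main obstacle is the explicit bookkeeping in the previous paragraph: one has to carry the (fairly involved) birational transformation of Theorem~\ref{main4} through to a usable formula for $g(X)$ as a function of $(\X,\Y)$ — equivalently, to locate the forbidden locus $y=0$ and the region $a,b,c,d,e>0$ on $\Gg_\ta^m$ — and then verify, point by point, that torsion of order $\le4$ always lands on the bad locus while, precisely under the hypotheses of (i), an order-$8$ point escapes it and realises the square class of $\sqrt{m(r^{2}-s^{2})}$. A secondary point to keep straight is that the scaling $x=nX$ is legitimate only modulo squares, so Remark~\ref{rem1}(1) is invoked throughout to reduce to realising the correct square class of $n$ rather than $n$ itself.
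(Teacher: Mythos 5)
Your overall route is the same as the paper's: use Theorems~\ref{main3} and \ref{main4} to turn ``envelope with ratio $m$'' into a condition on rational points of $\Gg_\ta^m$, show that torsion of order at most $4$ is degenerate, that order-$8$ torsion produces exactly the square class of $m(r^2-s^2)$, and that otherwise an envelope exists precisely when there is a point of infinite order, i.e.\ $r_\ta(m)\geq 1$. The genuine differences are minor: you scale $x=nX$, $z=n^2Z$ to remove $n$ and then track square classes of $g(X)$ (legitimate, and equivalent to the paper's device of writing $y^2$ as a square times the quantity $A_\ta^{(m,n)}$, which is linear in $n$, so that its square-free part plays the role of your square class), and you obtain positivity from density of the multiples of an infinite-order point in the real locus, where the paper instead chooses the sign of $\Y$ and carries out an explicit sign analysis of the two factors $A_1$, $A_2$.

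What keeps your write-up from being a proof is that the two verifications you ``grant'' are where the content of the theorem lies. For the degeneracy of low-order torsion the paper actually does the computation: the order-$4$ points $\pm\Tc=\bigl((r^2-s^2)m,\pm r(r^2-s^2)m(m+1)\bigr)$ map under \eqref{trans1} to the $x$-coordinate $-2n(r+s)/(m+1)$, a root of the cubic defining $E_\ta^{(m,n)}$, forcing $y=0$; you should also dispose of the $2$-torsion, which sits on the zero/pole locus of the substitution, rather than lumping it in unexamined. For the order-$8$ evaluation the paper, like you, only sketches the transformation of $(\X_i,\pm\Y_i)$ down to an envelope, so there you are no worse off. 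Two further points need attention. First, the hypothesis of (i) gives $\sqrt{M_0}\in\Q$, but by Theorem~\ref{main5} a \emph{rational} point of order $8$ also requires $\sqrt{M_1}$ or $\sqrt{M_2}$ to be rational, which does not follow from $\sqrt{m}\sin\ta,\sqrt{m+1}\in\Q^*$ alone (the paper's own sketch smuggles in the extra data $\alpha,\beta$, i.e.\ a congruent-number condition, at exactly this point); your bald assertion ``a rational order-$8$ point exists'' is therefore a gap you must close, e.g.\ by checking that the final envelope components are rational even when the intermediate coordinates are not. Second, your density argument must verify that the open region $g>0$ meets the real component in which the multiples of your chosen infinite-order point accumulate (if not, translate by the $2$- or $4$-torsion); the paper's choice-of-sign analysis is the step that replaces this.
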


Using the above theorem, we  obtain the next result.
\begin{thm}
	\label{main7}
	\begin{itemize}
		\item[(i)] The set $\Nc_{\ta, m}$ is empty if and only if  $\sqrt{m} \sin(\ta) $ or $\sqrt{m+1}$ is not a rational number 	and $ r_\ta(m)=0$.
		\item[(ii)]  $\Nc_{\ta, m}=\left\{\sqrt{m(r^2-s^2)} \ \bmod \  (\Q^*)^2 \right\}$ if and only if
		$\sqrt{m} \sin(\ta)$ and $\sqrt{m+1}$  belong to $\Q^*$ and  $ r_\ta(m)=0.$
		\item[(iii)] The set  $\Nc_{\ta, m} $  has  infinitely many elements if and only if   $ r_\ta(m) \geq 1.$
	\end{itemize}		
\end{thm}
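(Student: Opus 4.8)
The plan is to obtain all three parts of Theorem~\ref{main7} from Theorem~\ref{main6} by a case split on its dichotomy, the only substantial ingredient being an upgrade of ``many rational points on $\Gg_\ta^m$'' to ``many square classes of $n$''. Write $(\mathrm A)$ for the condition that $\sqrt m\,\sin(\ta)\in\Q^*$ and $\sqrt{m+1}\in\Q^*$; note that $\sqrt m\,\sin(\ta)\in\Q^*$ is equivalent to $m(r^2-s^2)\in(\Q^*)^2$, and in that case let $n_0:=\sqrt{m(r^2-s^2)}\bmod(\Q^*)^2$ be the unique squarefree positive integer in that class. By Remark~\ref{rem1}(1), $\Nc_{\ta,m}$ is a set of squarefree integers and membership in it depends only on the square class of $n$, so ``$\Nc_{\ta,m}$ infinite'' means ``infinitely many square classes admit a $\ta$-parallelogram envelope with ratio $m$''.

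First I would prove the intermediate dichotomy: (a) if $r_\ta(m)=0$ then $\Nc_{\ta,m}$ is finite, namely $\emptyset$ when $(\mathrm A)$ fails and $\{n_0\}$ when $(\mathrm A)$ holds; (b) if $r_\ta(m)\ge1$ then $\Nc_{\ta,m}$ is infinite. When $(\mathrm A)$ fails, (a) is immediate from Theorem~\ref{main6}(ii). When $(\mathrm A)$ holds and $r_\ta(m)=0$, the group $\Gg_\ta^m(\Q)$ is just its torsion subgroup; via the reduction of the system~\eqref{eq6a} to the elliptic curve $\Gg_\ta^m$ carried out in Theorems~\ref{main3}--\ref{main4} (so that the square class of $n$ attached to an envelope with ratio $m$ is determined by the associated rational point), only finitely many $n$ can occur, and a finite inspection against the torsion list of Theorem~\ref{main5} and the formulas of Theorem~\ref{main4} shows that the only torsion points giving a genuine envelope with all coordinates positive are those of order $8$ and that they all produce $n_0$; together with $n_0\in\Nc_{\ta,m}$ from Theorem~\ref{main6}(i) this gives $\Nc_{\ta,m}=\{n_0\}$. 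For (b) I would take a point $Q$ of infinite order in $\Gg_\ta^m(\Q)$ (possible since $r_\ta(m)\ge1$; for all but finitely many $m$ one may take $Q=\Pc$ of Theorem~\ref{main4}), so that its multiples $kQ$ give infinitely many distinct rational points of $\Gg_\ta^m$ and hence infinitely many $\ta$-parallelogram envelopes with ratio $m$; it then remains to see that the attached integers fall into infinitely many square classes. If the square-class function $\nu$ on $\Gg_\ta^m$ were constant, say $\equiv\bar n$, along $\{kQ\}$, then the smooth projective model of $\bar n\,\nu(P)=w^2$ --- a nontrivial double cover of $\Gg_\ta^m$, of genus $\ge2$ because $\Gg_\ta^m$ has positive rank and $\nu$ is non-constant --- would carry infinitely many rational points, contradicting Faltings' theorem; alternatively, a canonical-height computation on $\Gg_\ta^m$ shows that the squarefree part of $\nu(kQ)$ is unbounded in $k$.

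Granting (a) and (b), the three assertions follow by bookkeeping. For \emph{(iii)}: $r_\ta(m)\ge1$ gives infinitude by (b), while $r_\ta(m)=0$ gives finiteness by (a), so $\Nc_{\ta,m}$ is infinite exactly when $r_\ta(m)\ge1$. For \emph{(i)}: if $(\mathrm A)$ holds then $n_0\in\Nc_{\ta,m}\neq\emptyset$ by Theorem~\ref{main6}(i), so $\Nc_{\ta,m}=\emptyset$ forces $(\mathrm A)$ to fail, and then Theorem~\ref{main6}(ii) yields $\Nc_{\ta,m}=\emptyset\iff r_\ta(m)=0$; hence $\Nc_{\ta,m}=\emptyset$ precisely when $\sqrt m\,\sin(\ta)$ or $\sqrt{m+1}$ is irrational and $r_\ta(m)=0$. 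For \emph{(ii)}: if $(\mathrm A)$ holds and $r_\ta(m)=0$ then $\Nc_{\ta,m}=\{n_0\}$ by (a); conversely, $\Nc_{\ta,m}=\{n_0\}$ is nonempty and finite, so $(\mathrm A)$ holds by (i) and $r_\ta(m)=0$ by (iii).

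The hard part is the non-constancy step inside (b): ruling out that a positive-rank $\Gg_\ta^m$ merely produces many envelopes spread over a bounded set of $n$. I expect to settle it by the Faltings/genus-$\ge2$ argument above, or, for an effective statement, by writing $\nu$ out explicitly and bounding below the number of primes dividing its squarefree values along a sequence $kQ$. A secondary but genuine point is the finite verification in (a) that under $(\mathrm A)$ with rank $0$ no torsion point outside the order-$8$ ones gives a non-degenerate envelope, so that $\Nc_{\ta,m}$ collapses to the single class $n_0$.
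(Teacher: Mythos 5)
Your overall route is the paper's own: parts (i) and (ii), and the necessity in (iii), are read off from Theorem~\ref{main6} together with the torsion analysis in its proof (the order-$4$ points degenerate, the order-$8$ points produce exactly the class $\sqrt{m(r^2-s^2)}$), and the sufficiency in (iii) is a pigeonhole-plus-Faltings argument on the auxiliary curve obtained by imposing the square condition $\Zc^2=\varphi(\X,\Y)$ on $\Gg_\ta^m$; the paper runs it over a fixed quadratic field $\Bbb K=\Q\bigl(\sqrt{\varphi(P)}\bigr)$, you run it on the quadratic twist $\bar n\,\nu=w^2$ over $\Q$, which is the same argument in different clothes.

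Two points in your key step are, however, not justified as written. First, to get $\Nc_{\ta,m}$ infinite you must exclude that the square classes attached to points of $\Gg_\ta^m(\Q)$ form \emph{any} finite set, not only that they are constant along $\{kQ\}$; this is harmless (pigeonhole an infinite subsequence with one fixed class $\bar n_i$ and apply your double-cover argument to it, exactly as the paper pigeonholes a fixed field $\Bbb K$), but it needs saying. Second, and more seriously, your stated reason that the double cover has genus $\ge 2$ --- ``because $\Gg_\ta^m$ has positive rank and $\nu$ is non-constant'' --- is not a reason: by Riemann--Hurwitz the genus of a degree-$2$ cover of a genus-one curve is governed by its ramification, i.e.\ by the divisor of $\varphi$ modulo squares in the function field, and has nothing to do with the Mordell--Weil rank; non-constancy is also insufficient, since if $\varphi$ were a constant times a square in $\Q(\Gg_\ta^m)$ the cover would split into genus-one components, which can carry infinitely many rational points, and Faltings gives nothing. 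So the genus bound is precisely the point that must be proved; the paper supplies it as Lemma~\ref{genus}, showing the curve cut out by $\Zc^2=\varphi(\X,\Y)$ and the Weierstrass equation of $\Gg_\ta^m$ has genus $9$. Your alternative (a canonical-height argument that the squarefree part of $\nu(kQ)$ is unbounded) is likewise only a plan. Once a genus computation --- or at least a verification that $\varphi$ is not a square times a constant in the function field, followed by Riemann--Hurwitz --- is inserted, your proof coincides with the paper's.
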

\begin{thm}
	\label{main8}
	\begin{itemize}
		\item [(i)]
		A natural number $n$ belongs to $\Nc_{\ta, m}$ if and only if the  curve
		\[\Cc_\ta^{(m,n)}:	
		\left\{\begin{array}{l}
		\hspace{-.1cm} \Y^2= \X^3+(r^2m^2+2s^2m+r^2) \X^2+m^2(r^2-s^2)^2 \X \vspace{.2cm}\\
		\hspace{-.1cm} \Zc^2= \displaystyle \frac{2n (r^2-s^2)}{m+1}\left(\Y+ s (m+1) \X\right)\vspace{.1cm}\\
		\qquad \times \left(\X^2 -(r-s)(ms-r)\X - (r-s)\Y\right) \vspace{.2cm}\\
		\qquad \times \left(\X^2 +(r+s)(ms-r)\X + (r+s)\Y\right),\\
		\end{array}\right.\]
		has a rational point $(\X,\Y,\Zc)$ with $\Y \neq 0$.	
		\item [(ii)]  For any natural number $n$, there exist only finitely many $\ta$-parallelogram envelopes
		with ratio 	$m$ for $n$.
	\end{itemize}
\end{thm}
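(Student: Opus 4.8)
The plan is to derive part~(i) as a direct translation of Theorem~\ref{main3} through the birational map of Theorem~\ref{main4}, and to obtain part~(ii) as a finiteness consequence of part~(i) combined with the structure of $\Nc_{\ta,m}$ established in Theorems~\ref{main6} and~\ref{main7}. For part~(i), recall that by Theorem~\ref{main3} a natural number $n$ lies in $\Nc_{\ta,m}$ precisely when the system $\{E_\ta^{(m,n)},\,G_\ta^{(m,n)}\}$ of~\eqref{eq6a} has a rational solution $(x,y,z)$ with $y\neq 0$. The quartic $G_\ta^{(m,n)}$ was shown in Theorem~\ref{main4} to be birationally equivalent to the cubic ${\mathcal G}_\ta^m$; the point now is that this birational map, together with the elliptic curve $E_\ta^{(m,n)}$, can be packaged into the single curve $\Cc_\ta^{(m,n)}$. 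First I would write down explicitly the birational change of variables $(x,y)\mapsto(\X,\Y)$ realizing Theorem~\ref{main4}, expressing $x$ (and $z^2$) as rational functions of $\X$ and $\Y$; substituting into the equation of $E_\ta^{(m,n)}$ produces the first component $\Y^2=\X^3+(r^2m^2+2s^2m+r^2)\X^2+m^2(r^2-s^2)^2\X$, while substituting the formula for $z^2$ and clearing denominators produces the product expression for $\Zc^2$. The three quadratic-in-$\X$ factors on the right-hand side are exactly the images under the change of variables of the factorization of the quartic $x^4+b_1x^3+b_2x^2+b_3x+b_4$; I would verify this factorization by a direct (if tedious) identity check, using the formulas for $b_1,\dots,b_4$ given in Theorem~\ref{main3} and the explicit substitution. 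The condition $y\neq 0$ in~\eqref{eq6a} corresponds under the birational map to $\Y\neq 0$, which accounts for the hypothesis on the rational point $(\X,\Y,\Zc)$.

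For part~(ii), fix a natural number $n$ and suppose, for contradiction, that there are infinitely many $\ta$-parallelogram envelopes with ratio $m$ for $n$. Each such envelope is a quintuple $(a,b,c,d,e)_\ta^m$ of positive rationals satisfying~\eqref{eq4} with $d=mb$; by the chain of equivalences leading to Theorem~\ref{main3} and then~(i), each gives a rational point $(\X,\Y,\Zc)$ on $\Cc_\ta^{(m,n)}$ with $\Y\neq 0$, and distinct envelopes give distinct points (one tracks $b$, hence $x$, hence $\X$, through the explicit formulas). Now $\Cc_\ta^{(m,n)}$ is a curve covering ${\mathcal G}_\ta^m$ via the first projection $(\X,\Y,\Zc)\mapsto(\X,\Y)$, and adjoining a square root of a degree-$6$ polynomial in $\X$ (equivalently, in the function field of ${\mathcal G}_\ta^m$) generically produces a curve of genus at least $2$. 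Assuming this genus bound, Faltings's theorem forces $\Cc_\ta^{(m,n)}(\Q)$ to be finite, contradicting the existence of infinitely many distinct points, and we are done. The remaining possibility is that $\Cc_\ta^{(m,n)}$ degenerates — becomes reducible or of genus $\leq 1$ — for the special values of $m$ singled out in Theorem~\ref{main5} and Theorem~\ref{main6}(i); but in that regime the set $\Nc_{\ta,m}$ has already been pinned down by Theorem~\ref{main7} to be either empty, a single class $\sqrt{m(r^2-s^2)}\bmod(\Q^*)^2$, or governed by $r_\ta(m)\geq 1$, and in none of those cases can a \emph{fixed} $n$ admit infinitely many envelopes: the envelopes with ratio $m$ for a fixed $n$ are parametrized by solutions with a bounded-height constraint coming from the fixed area $n\sqrt{r^2-s^2}$, so one instead argues directly that only finitely many rational points on the relevant elliptic or rational curve satisfy the additional area normalization $a(b+d)=rn$.

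The main obstacle I expect is the genus computation for $\Cc_\ta^{(m,n)}$ and, more delicately, ruling out the degenerate special values of $m$ uniformly. The first part of~(ii) is clean once one knows the genus is at least $2$ generically — this should follow from the Riemann–Hurwitz formula applied to the double cover $\Cc_\ta^{(m,n)}\to{\mathcal G}_\ta^m$ branched along the zeros of the sextic $(\Y+s(m+1)\X)(\X^2-(r-s)(ms-r)\X-(r-s)\Y)(\X^2+(r+s)(ms-r)\X+(r+s)\Y)$ on ${\mathcal G}_\ta^m$ — but one must confirm that this sextic, viewed on the elliptic curve, does not collapse to a square in the function field, which requires checking that its divisor of zeros is not $2$-divisible in $\mathrm{Pic}({\mathcal G}_\ta^m)$. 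I would handle this by a specialization argument: exhibit one value of $m$ (and $r,s$) for which the cover is visibly non-trivial and of the expected genus, so that it must be non-trivial generically. The genuinely fiddly step is then the \emph{uniform} treatment of all $(m,n)$ — including the finitely many bad $m$ of Theorem~\ref{main5} — for which I would fall back on the observation that fixing $n$ imposes one more algebraic condition than fixing membership in $\Nc_{\ta,m}$, cutting the (at most one-dimensional) solution set down to a zero-dimensional one, hence finite.
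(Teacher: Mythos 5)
Your overall strategy coincides with the paper's: part~(i) is obtained by pushing the system of Theorem~\ref{main3} through the birational map of Theorem~\ref{main4} (the paper sets $\Zc=(m+1)\X(\X+r^2-s^2)\,y/(2n(r^2-s^2))$ and uses the change of variables~\eqref{trans1}, with $\Y\neq 0$ tracking $y\neq 0$), and part~(ii) is a genus-plus-Faltings argument. However, your bookkeeping in (i) is backwards: the first equation of $\Cc_\ta^{(m,n)}$ is precisely the cubic $\Gg_\ta^m$, i.e.\ the birational model of the \emph{quartic} $G_\ta^{(m,n)}$ — it does not arise by substituting into $E_\ta^{(m,n)}$ — while the three factors in the $\Zc^2$-equation are the images of the three linear factors $x$, $x+2n(r+s)/(m+1)$, $x-2n(r-s)/(m+1)$ of the cubic defining $E_\ta^{(m,n)}$ under the substitution $x=-2n(r^2-s^2)(\Y+s(m+1)\X)/\bigl((m+1)\X(\X+r^2-s^2)\bigr)$ (this computation appears in the proof of Theorem~\ref{main6}), \emph{not} images of a factorization of the quartic $x^4+b_1x^3+\cdots$. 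The identity you propose to "verify by a direct check" is therefore false as stated; the error is fixable, but as written the verification would not go through.

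The genuine gap is in part~(ii). The paper disposes of it uniformly: by the same degree/canonical-sheaf computation as in Lemma~\ref{genus} it asserts that $\Cc_\ta^{(m,n)}$ has genus~$9$, then applies Faltings and the correspondence from Theorem~\ref{main3} and part~(i). Your Riemann--Hurwitz route through the double cover $\Cc_\ta^{(m,n)}\to\Gg_\ta^m$ is a legitimate alternative, but you only establish the genus bound "generically": you never verify that the function $(\Y+s(m+1)\X)\bigl(\X^2-(r-s)(ms-r)\X-(r-s)\Y\bigr)\bigl(\X^2+(r+s)(ms-r)\X+(r+s)\Y\bigr)$ is a non-square in the function field of $\Gg_\ta^m$ for \emph{every} admissible $(r,s,m)$, whereas the theorem is claimed for all $m\geq 1$, and a specialization argument only excludes finitely many unidentified $m$. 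Your fallback for the putative degenerate values — "fixing $n$ imposes one more algebraic condition, cutting the solution set to dimension zero" — is not an argument: $n$ is already built into $\Cc_\ta^{(m,n)}$ (it enters only through the constant $2n(r^2-s^2)/(m+1)$, a quadratic twist which does not change the genus), so there is no extra condition to impose, and in any case an algebraic condition on a curve need not cut out a finite set. To close the gap you must either prove the non-squareness (e.g.\ by exhibiting a zero or pole of odd order of this function on $\Gg_\ta^m$) for all relevant parameters, or compute the genus of $\Cc_\ta^{(m,n)}$ directly as the paper does.
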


The case $\ta=\pi/2$ of the next result gives a positive answer for all parts of Question~\ref{ques1};
in other words, all questions in Section~7 of the T.~Ochai's paper.
\begin{thm}
	\label{main9}
	Given a Pythagorean angle $\ta \in (0, \pi/2]$, the following hold:
	\begin{itemize}	
		\item [(i)]  
	$\Nc_\ta=\N$.
		\item [(ii)] Given any  $n\in \Nc_\ta$, there are  infinitely many distinct $\ta$-parallelogram envelopes for $n$.
		\item[(iii)] Given any $n\in \Nc_\ta$,  there are infinitely many rational numbers $m\geq 1$ such that $n\in\Nc_{\ta, m}$.
	\end{itemize}
\end{thm}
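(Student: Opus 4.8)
The plan is to derive all three parts from one statement, call it ($\star$): \emph{every square-free $n\in\N$ belongs to $\Nc_{\ta,m}$ for infinitely many rational $m\ge1$}. Granting ($\star$): part~(i) is immediate, since $\Nc_{\ta,m}\subseteq\Nc_\ta$ and $\Nc_\ta$ is stable under multiplication by squares (Remark~\ref{rem1}(1)), so every square-free $n$, hence every $n$, lies in $\Nc_\ta$; part~(iii) is weaker than ($\star$); and for part~(ii) one notes that a $\ta$-parallelogram envelope for $n$ determines its ratio $d/b$, so envelopes attached to distinct ratios are distinct, whence the infinitely many admissible $m$ furnished by ($\star$) (applied to the square-free part of $n$ and scaled back by Remark~\ref{rem1}(1)) give infinitely many distinct envelopes for $n$. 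So I fix a square-free $n$ and work toward ($\star$), writing $t=r\sin\ta$, so that $t\in\Z_{>0}$ and $r^2-s^2=t^2$ is a perfect square because $\ta$ is Pythagorean.

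By Theorem~\ref{main8}(i), to get $n\in\Nc_{\ta,m}$ it suffices to exhibit on $\Cc_\ta^{(m,n)}$ a rational point with $\Y\neq0$ out of which a genuine, all-positive-rational envelope can be read — which means staying away from the small-order torsion of $\Gg_\ta^m$, just as the order-$8$ points of Theorem~\ref{main6}(i) are usable only when $\sqrt m\sin\ta$ and $\sqrt{m+1}$ are rational. The input I use is the point $\Pc=\bigl(-(r^2-s^2)m^2,\,s(r^2-s^2)m^2(m+1)\bigr)$ of Theorem~\ref{main4}, which lies on $\Gg_\ta^m$ for every $m$ and has infinite order for all but finitely many $m$ (for $\ta=\pi/2$ the point $\Pc$ degenerates, and one uses instead the independent point of the isomorphic curve $E(m)$). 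Since the expression $P(\X,\Y)$ occurring in the $\Zc$-equation of $\Cc_\ta^{(m,n)}$ vanishes at $\Pc$ itself (forcing $\Zc=0$), I replace $\Pc$ by a fixed non-torsion point $Q=2\Pc$ (a higher multiple of $\Pc$ if $P$ were to vanish along it too, which generically it does not), compute $Q$ by the group law on $\Gg_\ta^m$, and set $f(m):=2(m+1)(r^2-s^2)\,P(Q)\in\Q(m)$. Then the $\Zc$-equation becomes $\Zc^2=\tfrac{2n(r^2-s^2)}{m+1}P(Q)=\tfrac{n\,f(m)}{(m+1)^2}$, so for every rational $m\ge1$ at which $Q$ is non-torsion (hence $\Y(Q)\neq0$ and $P(Q)\neq0$), $f(m)>0$, and $f(m)\equiv n\pmod{(\Q^*)^2}$, we obtain $n\in\Nc_{\ta,m}$. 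Thus ($\star$), and with it the whole theorem, follows as soon as the equation $f(m)=n\,\xi^2$ has infinitely many rational solutions $(m,\xi)$ with $m\ge1$.

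This last point is where I expect the main obstacle to sit. Cleared of denominators, $f(m)=n\,\xi^2$ defines a plane curve $\mathcal{D}_n$, and everything turns on its geometric genus: any cheaper one-parameter ansatz — fixing the ratio $b=d$, or imposing the degenerate relation $ra=sb$ — reduces instead to a family of elliptic curves that are quadratic twists of a single curve by $n$, and such families have Mordell--Weil rank $0$ for a positive proportion of $n$, so the point $Q$ must be chosen so that $\mathcal{D}_n$ has genus $0$. Granting that, the base points of the pencil — the finitely many $m$ at which a factor of $P(Q)$ or of $m+1$ vanishes, which lie on $\mathcal{D}_n$ for \emph{every} $n$ — furnish a rational point, hence a rational parametrisation, hence infinitely many admissible $m\ge1$, proving ($\star$) and the theorem. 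If $\mathcal{D}_n$ stubbornly has genus $1$ instead, the fallback is to show directly that its rank is positive for all $n$ by writing down a non-torsion point in closed form; this is exactly the device already used in Theorem~\ref{main4}, whose point $\Pc$ is independent precisely because it does not degenerate as $m$ varies, and the task is then to propagate that non-degeneracy down to $\mathcal{D}_n$.
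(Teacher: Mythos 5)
Your argument is conditional at exactly the point where the theorem's content lies, so it has a genuine gap. Everything is reduced to the claim $(\star)$, and $(\star)$ is in turn reduced to showing that $f(m)=n\,\xi^2$ has infinitely many rational solutions with $m\ge 1$, where $f$ is built from the value of the $\Zc$-polynomial at $Q=2\Pc$. You never establish this: you ``grant'' that the curve $\mathcal{D}_n$ has genus $0$, and offer as fallback an unproven positive-rank claim in genus $1$. Neither option is substantiated, and the expected picture is worse: the coordinates of $Q=2\Pc$ are rational functions of $m$ of substantial degree, so the cleared-denominator model of $\xi^2=n f(m)$ is a hyperelliptic-type curve whose genus is generically at least $2$; then Faltings' theorem (the very tool the paper uses in Lemma~\ref{genus} to prove \emph{finiteness} in Theorem~\ref{main8}(ii)) would give only finitely many admissible $m$ --- the opposite of what you need. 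Even in a genus-$1$ degeneration, proving positive rank of a twist-by-$n$ family for \emph{every} $n$ is the kind of statement you yourself flag as false for a positive proportion of $n$ in your other ansätze, so the fallback is not a route either. In short, the key difficulty has been relocated, not resolved, and the logical order (deriving (ii) from a strengthened (iii)) does not help because the strengthened (iii) is exactly what is missing.

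The paper avoids the $\Nc_{\ta,m}$/rank machinery entirely and this is where the hypothesis ``Pythagorean'' does its work: since $\sin\ta=t/r\in\Q^*$, the quantity $T=2n\sqrt{r^2-s^2}$ is rational, and an explicit elementary construction (Step~1, verified by Heron's formula) converts \emph{any} rational triangle $(c,e,f)$ of area $T$ into a $\ta$-parallelogram envelope for $n$ via
\begin{equation*}
a=\frac{4rn}{f},\qquad b=\frac{f^2+c^2-e^2}{2f}+\frac{as}{r},\qquad d=\frac{f^2+e^2-c^2}{2f}-\frac{as}{r}.
\end{equation*}
Fine's theorem provides one rational triangle of area $T$ (via the quartic $C_T$), and the birationally equivalent elliptic curve $E_T:\ Y^2=X^3+3T^2X-T^2(T^2-1)$ carries an explicit non-torsion point $P_1$ whose multiples $[\ell]P_1$ yield infinitely many distinct triangles of area $T$, hence infinitely many distinct envelopes for $n$ (part (ii)); part (iii) then follows simply by taking $m=b/d$ along this infinite family. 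If you want to salvage your strategy, you would have to prove your genus-$0$ (or rank) claim for $\mathcal{D}_n$, which at present is unsupported and likely false; otherwise the triangle-area construction is the workable path.
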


We note that the proof of Theorem~\ref{main9}  works only for the Pythagorean angles. Hence, the interested reader may consider the following question as a further research on the subject.
\begin{ques}
	\label{ques2}
	Are   the statements of  Theorem~\ref{main9} true for any arbitrary angle
	$\ta \in (0, \pi/2]$ with rational cosine?
\end{ques}

\section{Proof of Theorem~\ref{main0}}
\label{Pr0}
Let $\cos(\tau)=p/q \in \Q$ with $p, q \in \Z$ satisfying  $0< p < q$ and
$\gcd(p,q)=1$.
By the assumption, we have
$\sqrt{r^2-s^2}=n' v^2 \sqrt{q^2-p^2}$, where $n'$ is a positive integer and $v$ is a rational number.

If $n\in\Nc_\ta(\tau)$, then $4n\in\Nc_\ta(\tau)$, and there exists a $\ta$-parallelogram envelope  $(a,b,c,d,e)_\ta^\tau$ with angle $\tau$ for $4n$, i.e.,
there exists a quintuple $(a,b,c,d,e)_\ta^\tau$ of positive rational  numbers satisfying
$$\begin{array}{ll}
\displaystyle c^2 =  a^2+b^2- \frac{2s ab}{r}, &     a(b+d)= 4 r n, \vspace{.2cm}\\
\displaystyle e^2 =  a^2+d^2+\frac{2s ab}{r}, & \displaystyle
\cos(\tau)=  \frac{a^2- bd - 4 s a (b-d)/r}{ce}.
\end{array}$$
These equations imply that
$$c^2+e^2 - \frac{2p c e}{q} = (b+d)^2.$$
Hence, we have  a rational $\tau$-triangle $(c,e, b+d)$ with area
$$ 2 n \sqrt{r^2-s^2}= 2 n n' v^2 \sqrt{q^2-p^2}.$$
Dividing all sides  by $v$ leads to a $\tau$-triangle $(c/v,e/v, (b+d)/v)$ with area
$2 n n' \sqrt{q^2-p^2}$, see Figure~6.
This means that $2 n n'$ is a $\tau$-congruent number as desired.
\begin{figure}[h]
		\begin{center}
		\includegraphics[height=.13\textheight,width=.52\textwidth]{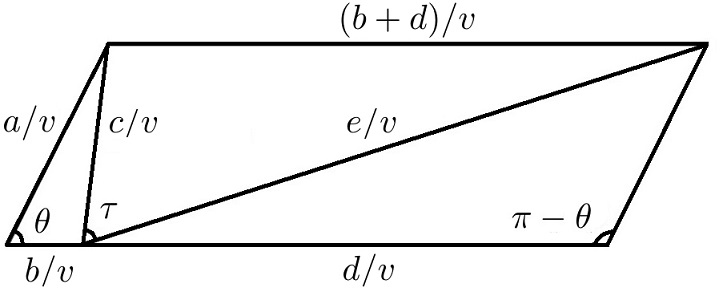}
		\caption{A $\tau$-triangle for $2n n'$ obtained from a $\ta$-parallelogram envelope for $n$}
	\end{center}
\end{figure}

Conversely, since $2n n'$ is a $\tau$-congruent number,  there exists a rational $\tau$-triangle $(a, b, c)$ for $2n n'$.
Then,  as described in the following,  one may find a $\ta$-parallelogram envelope with angle $\tau$ for $n$ given by
\begin{equation}
\label{e0}
\left( \frac{rab}{2qcn'},
\frac{a^2}{2c} - \frac{(pn'-s) ab}{2qcn'}, \frac{a}{2} ,\frac{c^2-a^2}{2c} +\frac{(pn'-s)ab}{2qcn'}, \frac{b}{2}\right)_\ta^\tau.
\end{equation}
We first note that the $\tau$-triangle $(a,b,c)$ for $2n n'$ satisfies
\begin{equation}
\label{0}
c^2 = a^2+b^2-\frac{2p ab}{q}  ,\quad  ab  = 4 q n n'.
\end{equation}
Hence, to find a  $\ta$-parallelogram envelope  with angle $\tau$ for $4n $, we  have to solve
\begin{align}
a^2 &= x^2+y^2-\frac{2sxy}{r}, \label{1}\\
b^2 &= x^2+(c-y)^2+\frac{2sx(c-y)}{r}, \label{2}\\
xc & = 4 r n. \label{3}
\end{align}
Substituting \eqref{1} into \eqref{2} and using \eqref{0}, we get $a^2-cy-sab/r+scx/r=0$
which gives
\begin{equation}
\label{4}
y = \frac{a^2}{c}-\frac{pab}{qc}+ \frac{sx}{r}.
\end{equation}
From \eqref{0} and \eqref{3},  we have  $x=rab/(n'qc)$ which implies
$y=a^2/c- (pn'-s)ab/(qcn')$ by applying \eqref{4}.
Therefore, we  have found the $\ta$-parallelogram envelope $(x, y, a, c-y, b)_{\ta}^{\tau}$ with angle $\tau$ for $4 n$. Dividing all components by $2$  leads to a $\ta$-parallelogram envelope  for  $n$ defined by \eqref{e0}, see Figure~7.
\begin{figure}[h]
		\begin{center}
		\includegraphics[height=.19\textheight,width=.62\textwidth]{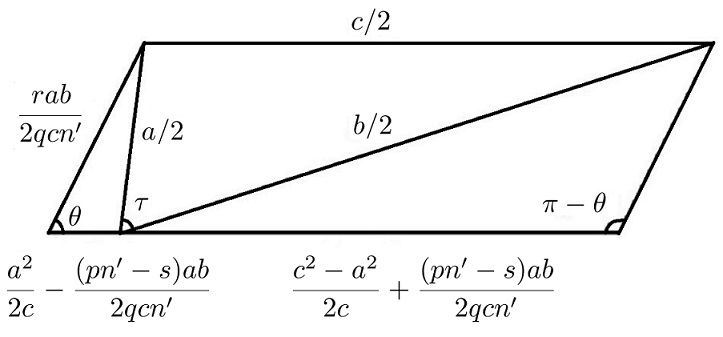}
		\caption{A $\ta$-parallelogram envelope with angle $\tau$ for $n$ obtained from a rational $\tau$-triangle for $2nn'$}
	\end{center}
\end{figure}
Checking that  it is really a $\ta$-parallelogram envelope  with angle $\tau$ for $n$ is left as an exercise to the reader.

For  $\tau=\pi/2$, we have a $\ta$-parallelogram envelope with angle $\pi/2$ given by
\begin{equation*}
\label{e1}	
\left(\frac{rab}{2cn'},
\frac{a^2}{2c}+\frac{s ab}{2cn'}, \frac{a}{2}, \frac{c^2-a^2}{2c}-\frac{s a b}{2cn'}, \frac{b}{2}\right)_{\pi/2}.
\end{equation*}
In the case $\tau=\ta$,  we obtain the following
$\ta$-parallelogram envelope with angle $\ta$ given by
\begin{equation*}
\label{e2}
\left(\frac{ab}{2c},
\frac{a^2}{2c}, \frac{a}{2}, \frac{c^2-a^2}{2c}, \frac{b}{2}\right)_\ta
\end{equation*}	
using a rational $\ta$-triangle $(a,b,c)$. The last statement is trivial by Remark~\ref{rem1}~(1) and  the definition of $\tau$-congruent numbers.
Therefore,  the proof of Theorem~\ref{main0} is completed.

\section{Proofs of Theorems~\ref{main1}, \ref{main2}, and \ref{main3}}
\label{Pr1-2}
The proof of Theorem~\ref{main1} is a consequence of the following one-to-one correspondence between $\Rc_\ta$, the set of all rational solutions of \eqref{eq5}, and  $\Pc_\ta$ the set of all $\ta$-parallelogram envelopes for $n$. Indeed, for any $(u, v, w, x, y)\in \Rc_\ta$, we have a $\ta$-parallelogram envelope for $n$ as follows:
\begin{equation}
\label{ee1}
\begin{split}
& a  = \displaystyle \left| \frac{y}{2x}\right| =\left| \frac{v}{2u}\right|,\quad  b  = \left| \frac{r w x}{y}\right|, \quad c  =  \left| \frac{x^2+(r^2-s^2)w^2}{2y}\right|, \vspace{.2cm}\\
& d  = \displaystyle \left| \frac{r (2n -w) u}{v}\right|, \quad e  =  \left| \frac{u^2+(r^2-s^2)(2n -w)^2}{2v}\right|.
\end{split}
\end{equation}
Conversely, any $\ta$-parallelogram envelope $(a, b, c, d, e)_\ta$ for $n$ corresponds to  a rational solution
$(u, v, w, x, y)$ defined by
\begin{equation*}
\label{ee2}
\begin{array}{lll}
x \displaystyle = 2 a \left(a+c-\frac{s}{r}b\right), & \displaystyle y= 4 a^2 \left(a+c-\frac{s}{r}b\right), & w=\displaystyle \frac{2 ab}{r}, \vspace{.2cm}\\
u \displaystyle = 2 a \left(a+e+\frac{s}{r}d\right), & \displaystyle v= 4 a^2 \left(a+e+\frac{s}{r}d\right).&
\end{array}
\end{equation*}

In order to show  Theorem~\ref{main2}, we first assume that  $n\in\Nc_\ta$. Then, by
Theorem~\ref{main1} there exists a rational solution
$(u,v,w,x,y)\in \Rc_\ta $ of Equations~\eqref{eq5} satisfying $yv \not =0$ and $0 < w \leq n$.
Substituting  $uy/x$ for $v$ into the equation of $F_\ta^N$ gives that
$$\frac{u^2 y^2}{x^2}= u(u+(r-s)N)(u-(r+s)N),$$
where $N=2n -w$.
Multiplying both sides of the latter equation by $x^2$ and then dividing by $u$, we obtain
$$x^2 u^2 - 2x^2Nsu- (r^2-s^2) N^2 x^2 - y^2 u =0.$$
Substituting  $y^2$ with $ x^3 + 2 s w x^2 -(r^2-s^2) w^2x$ into the above equation and then dividing both sides of the resulting equation by $x$ give a quadratic equation in terms of $u,$
$$x u^2 -(x^2 +2xs(N+w) -(r^2-s^2)w^2)u - (r^2-s^2)N^2 x=0.$$
Solving this equation, we obtain
\begin{align}
u&=\frac{x^2  + 2xs(N+w) -(r^2-s^2)w^2}{2x}\nonumber\\
&\quad \pm\frac{\sqrt{(x^2 + 2xs(N+w) -(r^2-s^2)w^2)^2+
		4(r^2-s^2)N^2 x^2} }{2x}. \label{eq7}
\end{align}
Since $x$, $u$, and $w$ are all rational, there exists  some $z \in \Q$ such that
\begin{equation}
\label{eq8}
z^2=(x^2 + 2xs(N+w) -(r^2-s^2)w^2)^2+ 4(r^2-s^2)N^2 x^2.
\end{equation}
Thus, we obtain the desired rational solution $(w, x, y, z)$ of Equation~\eqref{eq6}.
Conversely, if we assume that $(w, x, y, z)$ with $y \not =0$ and $0<w \leq n $ is a solution
of Equation~\eqref{eq6}, then we obtain a rational solution
$(u, v, w, x, y)$ of \eqref{eq5}, where $u$ is given by Equation~\eqref{eq7} and $v= yu/x \not =0$, respectively.
Therefore,  the proof of Theorem~\ref{main2} is completed.

By the condition $d=mb$,  we obtain that
$$ a ( b+ d)=rn \Longleftrightarrow a b(m+1)=rn \Longleftrightarrow \frac{2a b}{r}=\frac{2n}{m+1},$$
and the aforementioned correspondence with $w=2n/(m+1)$  implies that a natural number $n$ belongs to $\Nc_{\ta,m}$  if and only if the simultaneous Equations~\eqref{eq5} with $N=2nm/(m+1)$ have a rational solution $(u,v,x,y)$ with $y v \not =0$.
Now, using Theorem~\ref{main2} and some simple algebraic simplifications on Equation~\eqref{eq8} we obtain the desired Equation~\eqref{eq6a}. This completes the proof of Theorem~\ref{main3}.

\section{Proofs of Theorems~\ref{main4} and \ref{main5}}
\label{main4-5}
We fix  the natural number $n$,  the rational number $m\geq 1$ and an angle $\ta \in (0,\pi/2]$ with rational cosine.
One can transform the quartic $G_\ta^{(m,n)}$
into the cubic $\Gg_\ta^{m}$ by the following birational transformations:
\begin{equation}
\left\{
\begin{array}{l}
\label{trans1}
\hspace{-.1cm} x\displaystyle=\frac{-2n (r^2-s^2) \left(\Y + s(m+1) \X\right)}{(m+1) \X (\X + r^2-s^2)},\vspace{.2cm}\\
\hspace{-.1cm} z\displaystyle=\frac{4 n^2 (r^2-s^2)}{(m+1)^2 \X (\X + r^2-s^2)^2}\big(\X^3 + (2 d_2 -(r^2-s^2)) \X^2 \vspace{.1cm}\\
\qquad + \left(3 d_1+ 2s (m+1) \Y\right) \X + (r^2-s^2) d_1\big),
\end{array}
\right.
\end{equation}
where $  d_1=m^2(r^2-s^2)^2$ and  $d_2=(r^2m^2+2s^2m+r^2)$ are coefficients of the elliptic curve
${\mathcal G}_\ta^m$, and
\begin{equation*}
\left\{
\begin{array}{l}
\label{trans2}
\hspace{-.1cm} \X=\displaystyle\frac{- (r^2-s^2) \left((m+1)^2 x^2 + 4 sn(m+1)^2 x + (m+1)^2 z - 4 n^2 (r^2-s^2)\right)}{2 (m+1)^2 x^2},\vspace{.2cm}\\
\hspace{-.1cm} \Y=\displaystyle\frac{(r^2-s^2) \left( s (m+1)^4 x^3 + c_2 x^2 + c_1 x  +c_0\right) }{2 (m+1)^3 x^3},
\end{array}
\right.
\end{equation*}
where
\begin{align*}
c_0 & =- 2n (r^2-s^2)\left( (m+1)^2 z - 4 n^2 (r^2-s^2)\right), \\
c_1 & = s (m+1)^2 \left( (m+1)^2 z - 12 n^2 (r^2-s^2)\right), \\
c_2 & = 2 n (m+1)^2 \left( 2 d_2 - 3 (r^2-s^2)\right).
\end{align*}

The points $(x,z)=\left( 0,\pm 2n(r^2-s^2)(m+1) \right) $ on the quartic $G_\ta^{(m,n)}$ map to the point at infinity $\infty=[0:1:0]$ on ${\mathcal G}_\ta^m$ by the above change of variables.
The  Mordell-Weil group of rational points on ${\mathcal G}_\ta^m$ has generic rank one, because
one can easily examine by {\sf{SAGE}} software that   $[n]\Pc $ are not $\infty$ for $2 \leq n\leq 16$, where
$$\Pc=(-(r^2-s^2) m^2, s (r^2-s^2) m^2  (m+1)).$$
Hence, the point $\Pc$ is of infinite order on ${\mathcal G}_\ta^m(\Q)$.
This completes the proof of Theorem~\ref{main4}.

In what follows we classify the possibilities for ${{\mathcal G}_\ta^m(\Bbb Q)}_{\text{tors}}$.
It has trivially the point $(0,0)$, which is of order~$2$.
Hence, by the celebrated Mazur's theorem \cite{slm1} on the torsion subgroup of elliptic curves over the rational numbers, we have  the following possibilities:
$$\frac{\Z}{2\Z}, \ \frac{\Z}{4\Z}, \ \frac{\Z}{6\Z}, \ \frac{\Z}{8\Z}, \ \frac{\Z}{10\Z}, \
\frac{\Z}{12\Z}, \ \frac{\Z}{2\Z}\times \frac{\Z}{4\Z}, \ \frac{\Z}{2\Z}\times \frac{\Z}{6\Z}, \
\frac{\Z}{2\Z}\times \frac{\Z}{8\Z}.$$
For any point $\Tc=(\mathcal{X},\mathcal{Y})$ on the curve $\mathcal{G}_\ta^m$, the duplication formula of ${\mathcal G}_\ta^m(\Bbb Q)$ leads to
\begin{align}
\label{Dup}
{\mathcal X}([2] \Tc)&= \frac{\left( {\mathcal X}^2-(r^2-s^2)^2m^2 \right)^2}{{4 \mathcal Y}^2}.
\end{align}
Hence the $\mathcal X$-coordinates of the points $[2n]\Tc$, for $n=2, 3, 4, 5, 6$,  are also squares.

If $\Tc$ is a point of order~$4$, then  ${\mathcal X}([2]\Tc)=0$.
By the duplication formula  and the fact that $(0,0)$ is the unique  point of order~$2$ on $\Gg_\ta^m$,  we find out two points of order~$4$ as $\Tc$ and $-\Tc$, where
$$\Tc:=\left((r^2-s^2)m, r(r^2-s^2)m(m+1)\right).$$
Thus,  the cases $\Bbb Z/2\Bbb Z$, $\Bbb Z/6\Bbb Z$ and $\Bbb Z/10\Bbb Z$ cannot happen. Hence, the  possibilities for torsion subgroup reduce to
$$\frac{\Z}{4\Z}, \ \frac{\Z}{8\Z}, \  \frac{\Z}{12\Z}, \ \frac{\Z}{2\Z}\times \frac{\Z}{4\Z}, \ \frac{\Z}{2\Z}\times \frac{\Z}{6\Z}, \ \frac{\Z}{2\Z}\times \frac{\Z}{8\Z}.$$
Let us rule out the two possibilities $\Bbb Z/12\Bbb Z$ and $\Bbb Z/2\Bbb Z\times\Bbb Z/6\Bbb Z$.
To this end, it suffices to show the non-existence of any  point of order~$3$.
By contrary,  we  assume that $\Tc=(\X, \Y)$ is such a point. Then, the equality $[2]\Tc= -\Tc$  implies that
$$-3{\mathcal X}^4+(-4r^2-8ms^2-4m^2r^2){\mathcal X}^3-6m^2(r-s)^2(r+s)^2{\mathcal X}^2+m^4(r-s)^4(r+s)^4=0,$$
which is unsolvable over $\Q$. Hence, the cases $\Z/12\Z$ and $\Z/2\Z\times\Z/6\Z$ can never happen as torsion subgroup of  ${\mathcal G}_\ta^m$ over $\Q$.
Considering these observations  and applying the Mazur's theorem show that
$${\mathcal G}_\ta^m(\Q)_{\text{tors}}\cong \frac{\Z}{4\Z},~\text{or}~\frac{\Z}{8\Z},~\text{or}~
\frac{\Z}{2\Z}\times \frac{\Z}{4\Z}~\text{or}~\frac{\Z}{2\Z}\times \frac{\Z}{8\Z}.$$

Now, if we assume that $\Tc$ is a point of order~$8$, then $[2]\Tc$ (resp. $[4]\Tc$) will be a point of order~$4$ (resp. $2$). Solving ${\mathcal X}([2]\Tc)=(r^2-s^2)m$ is equivalent to finding the solutions of the following quartic equation,
\begin{align*}
& {\mathcal X}^4-4m(r^2-s^2){\mathcal X}^3-2m(r^2-s^2)(2r^2+mr^2+2m^2r^2+3ms^2){\mathcal X}^2\\
&\quad -4m^3(r^2-s^2)^3{\mathcal X}+m^4(r^2-s^2)^4=0,
\end{align*}
which can be rewritten as
$({\mathcal X}-m(r^2-s^2))^4=4mr^2(r^2-s^2)(m+1)^2{\mathcal X}^2$. Solving this equation leads to the following list of $\X$-coordinates of $\Tc$,
\begin{equation*}
\begin{array}{ll}
\label{XX}
\X_1=M_0+\left(r(m+1)+\sqrt{M_1}\right)\sqrt{M_0},\vspace{.2cm} \\
\X_2=M_0+\left(r(m+1) -\sqrt{M_1}\right)\sqrt{M_0},\vspace{.2cm}\\
\X_3=M_0-\left(r(m+1)+\sqrt{M_2}\right)\sqrt{M_0}, \vspace{.2cm}\\
\X_4=M_0-\left(r(m+1)-\sqrt{M_2}\right)\sqrt{M_0},
\end{array}
\end{equation*}
where $M_0, M_1$, and $M_2$ are given by \eqref{Ms}.
Using the duplication formula~\eqref{Dup}, one may write down the $\Y$-coordinates as follow:
\begin{equation*}
\begin{array}{l}
\label{YY}
\Y_1 =\frac{1}{2}M_0\left(r(m+1)+\sqrt{M_1}\right)\left( r(m+1)+\sqrt{M_1} +2\right), \vspace{.2cm}\\
\Y_2=\frac{1}{2}M_0\left(r(m+1)-\sqrt{M_1}\right)\left( r(m+1)-\sqrt{M_1} +2\right), \vspace{.2cm}\\
\Y_3=\frac{1}{2}M_0\left(r(m+1)+\sqrt{M_2}\right)\left( r(m+1)+\sqrt{M_2} -2\right), \vspace{.2cm}\\
\Y_4=\frac{1}{2}M_0\left(r(m+1)-\sqrt{M_2}\right)\left( r(m+1)+\sqrt{M_2} -2\right).
\end{array}\end{equation*}

Thus, in order to have ${\mathcal G}_\ta^m(\Q)_{\text{tors}}\cong \Z/2\Z\times\Z/8\Z$, we need to assume that all the numbers $\sqrt{M_0}, \sqrt{M_1},$ and $\sqrt{M_2}$ are rational.
If one of the values  $\sqrt{M_1}$ and $ \sqrt{M_2},$ does not belong to $\Q$, then
${\mathcal G}_\ta^m(\Q)_{\text{tors}}\cong \Z/8\Z$ provided that $\sqrt{M_0} \in \Q^*$.
When the last condition fails to be true, we will have
${\mathcal G}_\ta^m(\Q)_{\text{tors}}\cong  \Z/4\Z \ \text{or} \ \Z/2\Z \times \Z/4\Z.$

In order to prove the last statement of Theorem~\ref{main5}, we assume that $M_0=m_0^2$ is a rational square. Then
$m = m_0^2/(r^2-s^2),$
which yields
\begin{align*}
M_1&= \frac{r(m_0^2+r^2-s^2)\left(r(m_0^2+(r^2-s^2))+2m_0(r^2-s^2)\right)}{(r^2-s^2)^2},
\\
M_2&= \frac{r(m_0^2+r^2-s^2)\left(r(m_0^2+(r^2-s^2))-2m_0(r^2-s^2)\right)}{(r^2-s^2)^2}.
\end{align*}
We let	$m_i=m_0^2 (r^2-s^2) \sqrt{M_i}$ for $i=1,2.$ Then,
\begin{align}
m_1^2&=r(m_0^2+r^2-s^2)\left(r(m_0^2+(r^2-s^2))+2m_0(r^2-s^2)\right), \label{F}\\
m_2^2&=r(m_0^2+r^2-s^2)\left(r(m_0^2+(r^2-s^2))-2m_0(r^2-s^2)\right). \label{G}
\end{align}
Each of Equations~\eqref{F} and \eqref{G} defines an elliptic curve over $\Q$, which is birational to
$$E_0: Y^2=X^3-108\,{r}^{2} ({r}^{2}-{s}^{2})^2 ({r}^{2}+3\,{s}^{2}) X+
432\,{r}^{4}  (r^2-s^2)^3 (r^2-9 s^2), $$
by the following change of variables:
\begin{equation*}
\left\{
\begin{array}{l}
\label{trans3}
\hspace{-.1cm} X =\displaystyle\frac{6r\left(r(r^2-s^2)m_0^2 +3 \varepsilon  (r^2-s^2)m_0 +3 (m_i+r) \right)}{m_0^2},\vspace{.2cm}\\
\hspace{-.1cm} Y =\displaystyle\frac{54  r  (r^2-s^2) m_0 \left(r(r^2-s^2)^2 m_0^2 + 2 \varepsilon r^2 m_0 + (m_i +3r) \right) +2  \varepsilon r (m_i+r) }{m_0^3},
\end{array}
\right.
\end{equation*}
and
\begin{equation*}
\left\{
\begin{array}{l}
\label{trans4}
\hspace{-.1cm} m_0\displaystyle=\frac{6r\left(Y + 3 \varepsilon (r^2-s^2)(X + 12r^2(r^2-s^2)\right)}
{\left( X + 12 r^2(r^2-s^2)\right) \left(X -24 r^2(r^2-s^2)\right)},\vspace{.2cm}\\
\hspace{-.1cm} m_i\displaystyle=\frac{r\left(X^3 + e_1 X  + e_0\right)}
{\left( X + 12 r^2(r^2-s^2)\right) \left(X -24 r^2(r^2-s^2)\right)^2},
\end{array}
\right.
\end{equation*}
where  $\varepsilon=(-1)^{i-1}$ for $i=1,2,$ and
\begin{align*}
e_0  &=- 72 r^2  (r^2-s^2)^2 \left( 48 r^4 (r^2-s^2) + \varepsilon Y \right),\\
e_1 &= 108 r^2 (r^2-s^2)(5r^2-9s^2) \left((r^2-s^2)+ \varepsilon Y \right).
\end{align*}
The  torsion subgroup of $E_0$ is  $\Z/2\Z \times \Z/2\Z$  containing
the   point at infinity and
$$P_0=(-12 r^2(r^2-s^2),0), \ P_1=(-6 r (r-s)(r^2-9 s^2),0),\
P_2=(-6 r (r+3s) (r^2-s^2),0).$$
Moreover, the Mordell-Weil rank of $E_0 (\Q)$ is at least one with the point of infinite order
$$Q=\left( -3 (r^2-s^2) (r^2 +3 s^2), 27 (r^2-s^2)^3\right).$$
By the above change of variables, the point $(m_0, m_i)=(0, r (r^2-s^2))$  on Equations~\eqref{F} and \eqref{G} gives the point at infinity on $E_0$, and the points $P_0$ and $Q-P_0$ give two points at infinity on the quartic curves~\eqref{F} and \eqref{G}.  For a positive integer $\ell$,  the  $\ell$-multiplication $[\ell] Q$  of  $Q$ gives either the rational solution $(m_0, m_1)$
or $(m_0, m_2)$. Hence, each of  $\sqrt{M_1}$ and $\sqrt{M_2}$ can be  rational numbers infinitely many times, when the multiplications of $Q$ vary.
Moreover, the torsion points  $P_1$ and $ P_2$ are the only ones that give a number $m_0$  leading to the rational numbers $m_1$, $m_2$ and hence the rational numbers $\sqrt{M_1}$ and $\sqrt{M_2}$.
This argument justifies the last statement of Theorem~\ref{main5}.

\section{Proofs of the main results on $\Nc_{\ta,m}$}
\subsection{Proof of Theorem~\ref{main6}}
Let  $\sqrt{m+1}=v $ and $\sqrt{m} \sin(\ta)= N'  u^2  $,   where
$N'$ is a positive integer and $\alpha, u\in \Q$.
Then,  $\sqrt{M_0} =  N' r  u^2$ and
$$\sqrt{M_1} = \frac{r v}{u} \sqrt{(v/u)^2+ 2 N'},  \quad
\sqrt{M_2} = \frac{r v}{u} \sqrt{(v/u) ^2- 2 N'}.$$
Hence, both the numbers $\sqrt{M_1}$ and $\sqrt{M_2}$ are rational if and only if   there are rational numbers $\alpha$ and $\beta$ such that $(v/u)^2+ 2 N' =\alpha^2$ and $ (v/u) ^2- 2 N'=\beta^2,$ which is equivalent to saying that $2N'$ is a congruent number with a right triangle
$(\alpha-\beta, \alpha+\beta, 2 \gamma)$, where $\gamma= v/u$.
Hence, for  such a right triangle we have
$\sqrt{M_1}= r \alpha \gamma $ and $\sqrt{M_2}= r \beta \gamma.$
In order to prove the part~(i) of Theorem~\ref{main6}, one needs to  do cumbersome computations using a series of transformations which we only give a sketch and leave the detailed computations to the reader.
For $i=1,\ldots, 4$,  writing the order~$8$ points $(\X_i, \pm \Y_i)$ in terms of $N', u, v, \alpha, \beta$,    transforming  them into the points $(x_i, z_i)$ on $G_\ta^{(m,n)}$ by  \eqref{trans1}, and  finding
$y_i$-coordinates on $E_\ta^{(m,n)}$ using $x_i$'s, we obtain rational solutions $(x_i, y_i, z_i)$ of
Equation~\eqref{eq6a}. Calculating $u_i$  from $x_i$ and $z_i$ using the formula~\eqref{eq7} and then putting
$v_i=u_i y_i/x_i$ lead to the solutions $(u_i,v_i, w, x_i, y_i )$ with $w=2 N'/(N' u^2+1)$ of  Equations~\eqref{eq6}. Now, using the correspondence~\eqref{ee1}, one may obtain a
$\ta$-parallelograms envelope with ratio $m$ for $\sqrt{M_0}\ \bmod \ (\Q^*)^2$, which means that it belongs to $\Nc_{\ta, m}$ as desired
in the part~(i) of Theorem~\ref{main6}.

If one or both $\sqrt{m+1}$ and $\sqrt{m} \sin(\ta)$ do not belong to $\Q$, then at
least one of the  $\sqrt{M_i}$ will be a non-rational number, hence the torsion subgroup of $\Gg_\ta^m$ cannot have a  point of order~$8$. Thus, it will be isomorphic to $\Z/4\Z$ or $\Z/2 \Z \times \Z/4\Z$, by Theorem~\ref{main5}. The order~$4$ points in $\Gg_\ta^m(\Q)_{tors}$ are
$\pm \Tc=((r^2-s^2)m, \pm r(r^2-s^2) m (m+1))$, which lead to a rational point $(x,z)$ on $G_\ta^{(m,n)}$ with $x$-coordinate $-2n (r+s)/(m+1)$. But, this is a zero of the cubic polynomial defining $E_\ta^{(m,n)}$.
Thus no points of order~$4$ in $\Gg_\ta^m(\Q)_{\rm{tors}}$ give us a $\ta$-parallelogram envelope.
Therefore,  the existence of a $\ta$-parallelogram envelope with ration $m$ for $n$, which one or both $\sqrt{m+1}$ and $\sqrt{m} \sin(\ta)$ are not rational, means that $\Gg_\ta^m(\Q)$ has a point of infinite order.

Conversely, we let $m$ and $\theta$ be as before, and  $n$ be a natural number.
Given any point  $(\X,\Y) \in \Gg_\ta^{(m,n)} (\Q)$ of infinite order,
the $\Y$-coordinates cannot be either
$0$  or $ \pm r(m+1) \X$ since it is neither of order~$2$ nor $4$.
By substituting
$$x=\frac{-2n (r^2-s^2) (\Y + s (m+1) \X)}{ (m+1) \X (\X + r^2-s^2)}$$
one can  write  the defining equation of $E_\ta^{(m,n)}$ as
$$y^2=\left( \frac{2n (r^2-s^2)}{(m+1) \X (\X + r^2-s^2)}\right)^2 \cdot 	A_\ta^{(m,n)}, $$
where
\begin{align*}
A_\ta^{(m,n)}&=\frac{2n\left(\Y+ s (m+1)\X\right)\left(\X^2 -(r-s)(ms-r)\X - (r-s)\Y\right)}{(m+1) \X (\X + r^2-s^2)} \nonumber\\
&\quad \times \left(\X^2 +(r+s)(ms-r)\X + (r+s)\Y\right).  \label{A1}
\end{align*}

If it is necessary, one can choose the sign of $\Y$ in a way such that $A_\ta^{(m,n)}$ is a positive rational number. To see this, we may assume $0<s<r$.
Since $(\X,\Y)$ is not of finite order and   $\Gg_\ta^m$ passes through the origin of the coordinate system, we have $\X>0$ implying that $\Y+ s (m+1) \X>0$.
Hence, the expression $A_\ta^{(m,n)}$ is positive if and only if the following two expressions are of the same sign,
$$A_1:=\X^2-(r-s)(ms-r)\X-(r-s)\Y,\quad A_2:=\X^2+(r+s)(ms-r)\X+(r+s)\Y.$$
For $\Y>0$ we have two cases:
\begin{itemize}
	\item[(i)]  $A_1, A_2>0$:  The relation $A_1>0$ holds if and only  if
	$$\X>(r-s)(ms-r) \quad \text{and} \quad \Y<\frac{\X(\X-(r-s)(ms-r))}{r-s},$$
	and $A_2>0$ if and only if
	$$\X>(r+s)(r-ms) \quad \text{and} \quad \Y>0$$
	or
	$$\X>(r+s)(r-ms)\quad \text{and} \quad \Y>-\frac{\X(\X+(r+s)(ms-r))}{r+s}.$$
	\item[(ii)] $A_1, A_2<0$:  The relation $A_1<0$ holds if and only if
	$$\X<(r-s)(ms-r) \quad \text{and} \quad \Y>0$$
	or
	$$\X> (r-s)(ms-r)\quad \text{and} \quad \Y>\frac{\X(\X-(r-s)(ms-r))}{r-s},$$
	and $A_2<0$ if and only if
	$$\X<(r+s)(r-ms) \quad \text{and} \quad \Y<-\frac{\X(\X+(r+s)(ms-r))}{r+s}.$$
\end{itemize}
For $\Y<0$ we have two cases:
\begin{itemize}
	\item [(i)]  $A_1, A_2>0$:  The relation $A_1>0$ holds if and only if
	$$\X>(r-s)(ms-r) \quad \text{and} \quad \Y<0$$
	or
	$$\X< (r-s)(ms-r)\quad \text{and} \quad \Y < \frac{\X(\X-(r-s)(ms-r))}{r-s},$$
	and $A_2>0$ if and only if
	$$\X>(r+s)(r-ms) \quad \text{and} \quad \Y>-\frac{\X(\X+(r+s)(ms-r))}{r+s}.$$
	\item [(ii)]  $A_1, A_2<0$.  The relation $A_1<0$ holds if and only if
	$$\X<(r-s)(ms-r) \quad \text{and} \quad \Y>\frac{\X(\X-(r-s)(ms-r))}{r-s}$$
	and $A_2>0$ if and only if
	$$\X<(r+s)(r-ms) \quad \text{and} \quad \Y<0$$
	or
	$$\X >(r+s)(r-ms) \quad \text{and} \quad \Y < -\frac{\X(\X+(r+s)(ms-r))}{r+s}.$$
\end{itemize}

Therefore, by Theorem~\ref{main2}, a natural number  $n\in \N$  belongs to  $\Nc_{\ta, m}$ provided that
$A_\ta^{(m,n)}$ is a rational square number.

\subsection{Proof of Theorem~\ref{main7}}
The parts~(i) and  (ii) are  direct consequences of Theorem~\ref{main6}.
The necessity of  (iii) is clear by sufficiencies of (i) and (ii).

For the sufficiency of (iii), we assume that $r_\ta(m)\geq 1$ and $(\X,\Y) $ is a point of infinite order.
Let $\varphi: \Gg_\ta^{(m,n)} (\Q) \rightarrow \Q$ be defined by  $\varphi(\infty)=0$ and
\begin{align*}
\varphi(\X,\Y)&= A_\ta^{(m,n)} \cdot \left( \frac{m+1}{2n}\right)  \X (\X+ r^2-s^2)\\
&= (\Y+ s (m+1) \X)\left(\X^2 -(r-s)(ms-r)\X - (r-s)\Y\right)\\
&\quad \times \left(\X^2 +(r+s)(ms-r)\X + (r+s)\Y\right).
\end{align*}
We also define the map $\pi: \Q \rightarrow \Q^*/(\Q^*)^2 \cup \{0\}$ by
$$\pi(q)=
\left\{\begin{array}{ll}
\hspace{-.1cm}  q \ \bmod \ \Q^* &  \text{\rm if}  \ q \neq 0 \vspace{.2cm}\\
\hspace{-.1cm} 0  &  \text{\rm if} \ q =0.
\end{array}\right.$$
Then, the set $\Nc_{\ta,m}$ has infinite element if and only if the image of $\Gg_\ta^{(m,n)} (\Q)$ under the composite map   $\pi \circ \varphi $ is infinite by the proof of the part~(ii) of Theorem~\ref{main6}.
We define $\Sigma$ to be the following set
$$\left\{ \Q(\varphi(P)): \ P=(\X,\Y)\in \Gg_\ta^{(m,n)} (\Q)\right\}.$$
Then,  the image of $\Gg_\ta^{(m,n)} (\Q)$ by   $\pi \circ \varphi $ is infinite if and only if $\Sigma $ is infinite.
Because, if we assume that $\Sigma $ is finite, then  by the pigeonhole principal there is a field ${\Bbb K} \in \Sigma$ and an infinite subset  $H \subset \Gg_\ta^{(m,n)} (\Q)$
such that ${\Bbb K} =\Q (\varphi (P))$ for all $P\in H$.
This implies that the algebraic surface $\Sc$ defined by the equation
$ \Zc^2=\varphi(\X,\Y)$
has infinitely many ${\Bbb K}$-rational points
$(\X,\Y,\Zc)=(\X(P), \Y(P), \sqrt{\varphi(P)})$ for $P\in H$.
Furthermore, $P\in \Gg_\ta^{(m,n)} (\Q)$ so that the following algebraic curve
$$\Cc_\ta^m:
\left\{\begin{array}{l}
\hspace{-.1cm} \Zc^2= \varphi(\X,\Y), \vspace{.2cm}\\
\hspace{-.1cm} \Y^2= \X^3+(r^2m^2+2s^2m+r^2) \X^2+m^2(r^2-s^2)^2 \X,
\end{array}\right.
$$
has also infinitely many ${\Bbb K}$-rational points as above.
But, this is a contradiction with the  Faltings' theorem \cite{slm3} on the  set of rational points on algebraic curves with genus~$\geq 2$, since  we have the following result:
\begin{lema}
	\label{genus}
	The  genus of  $\Cc_\ta^m$ is equal to  $9$.
\end{lema}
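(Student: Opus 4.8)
The plan is to compute the genus of $\Cc_\ta^m$ by realizing it as a fiber product (over the base elliptic curve $\Gg_\ta^m$) of two coverings and then applying the Riemann--Hurwitz formula. Concretely, $\Cc_\ta^m$ maps to $\Gg_\ta^m$ via $(\X,\Y,\Zc)\mapsto(\X,\Y)$, and this is a degree-$2$ cover branched precisely at the points of $\Gg_\ta^m$ where $\varphi(\X,\Y)=0$ or where $\varphi$ has a pole (the point at infinity). So the first step is to factor $\varphi(\X,\Y)$ completely on $\Gg_\ta^m$: it is the product of the three factors $\Y+s(m+1)\X$, $\X^2-(r-s)(ms-r)\X-(r-s)\Y$, and $\X^2+(r+s)(ms-r)\X+(r+s)\Y$. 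Using the curve equation $\Y^2=\X^3+d_2\X^2+d_1\X$ (with $d_1=m^2(r^2-s^2)^2$, $d_2=r^2m^2+2s^2m+r^2$) to eliminate $\Y$, each factor defines a divisor on $\Gg_\ta^m$; I would determine the degree of the zero divisor of $\varphi$ and, crucially, how many of these zeros are simple (odd order) versus of even order, since only odd-order zeros contribute genuine branch points of the double cover. One must also handle $\infty$ on $\Gg_\ta^m$ carefully: since $\X$ has a double pole and $\Y$ a triple pole there, $\varphi$ has a pole of some odd or even order at $\infty$, which decides whether $\infty$ is a branch point.

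Once the branch locus $B\subset\Gg_\ta^m(\bar\Q)$ of the double cover $\Cc_\ta^m\to\Gg_\ta^m$ is identified, Riemann--Hurwitz gives
\begin{equation*}
2g(\Cc_\ta^m)-2 = 2\,(2\cdot 1-2) + \#B,
\end{equation*}
so $g(\Cc_\ta^m)=1+\tfrac{1}{2}\#B$, and I need $\#B=16$ to conclude $g=9$. The second step is therefore the bookkeeping: each linear-in-$\Y$ factor $\Y+s(m+1)\X$ vanishes where $\Y=-s(m+1)\X$, i.e. where $s^2(m+1)^2\X^2=\X^3+d_2\X^2+d_1\X$, a cubic in $\X$ giving (generically) three points, each typically a simple zero of $\varphi$; the two quadratic-in-$\Y$ factors, after substituting $\Y^2$ from the curve, each produce a polynomial in $\X$ whose degree I must compute and whose roots I must check for multiplicity and for coincidence with the roots coming from other factors. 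Adding the contribution of $\infty$ should bring the total count of odd-order points to $16$. The expected arithmetic is that the three factors contribute $3$, roughly $6$, and roughly $6$ branch points respectively, with the remaining one coming from the point at infinity (or from a parity correction), summing to $16$.

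The main obstacle I anticipate is precisely controlling multiplicities and coincidences: a priori some of the zeros of the individual factors could collide with each other (which would turn a pair of simple branch points into a single unbranched point, lowering the genus) or could be tangential zeros of $\varphi$ of even order (which contribute nothing to branching), and one also has to make sure none of the branch points are the $2$-torsion point $(0,0)$ or the $4$-torsion points, where the substitution using the curve equation degenerates. Verifying genericity in the parameters $r,s,m$ — i.e. that for generic (hence, by semicontinuity, for all but finitely many, and in fact here for all relevant) values these collisions do not occur — is where the real work lies; in practice this is most safely done by a resultant/discriminant computation showing the relevant polynomials in $\X$ are squarefree and pairwise coprime, which can be checked symbolically. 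An alternative, cleaner route, which I would use as a cross-check, is to feed the explicit affine equations of $\Cc_\ta^m$ into a computer algebra system (\textsf{Magma} or \textsf{SAGE}) and compute the genus of its normalization directly; the Riemann--Hurwitz argument above then serves as the human-readable justification of the output $g=9$.
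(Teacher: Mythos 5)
Your route is genuinely different from the paper's: the paper treats $\Cc_\ta^m$ as a space curve, invokes adjunction for complete intersections (Hartshorne, Exercise II.8.4(e)) to identify its canonical sheaf with $\Oo(4)$, computes the degree by Bezout, and then uses $\deg\omega=2g-2$; you instead view $\Cc_\ta^m$ as the double cover $\Zc^2=\varphi(\X,\Y)$ of the elliptic curve $\Gg_\ta^m$ and apply Riemann--Hurwitz. That is in principle the right tool for the geometric genus of this model, but your branch-point bookkeeping fails, and not for genericity reasons. On $\Gg_\ta^m$ the function $\X$ has a double pole and $\Y$ a triple pole at $\infty$, so the factor $\Y+s(m+1)\X$ has pole order $3$ there while each quadratic factor has pole order $4$ (the $\X^2$ term dominates $\Y$); hence the zero divisor of $\varphi$ has degree $3+4+4=11$, i.e.\ each quadratic factor contributes only $4$ zeros, not the ``roughly $6$'' you budget, and the pole at $\infty$ has order $11$. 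Your target $\#B=16$ is therefore unattainable from the start, since the support of $\operatorname{div}(\varphi)$ consists of at most $10$ points.

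Worse, the collisions you propose to rule out generically occur identically in $(r,s,m)$: all three factors vanish at the $2$-torsion point $(0,0)$ (there $\operatorname{ord}\X=2$, $\operatorname{ord}\Y=1$, so $\operatorname{ord}_{(0,0)}\varphi=3$), and on the locus $\Y=-s(m+1)\X$ the second factor becomes $\X(\X+r^2-s^2)$, so the first and second factors always share the zero $\bigl(-(r^2-s^2),\,s(m+1)(r^2-s^2)\bigr)$, where $\varphi$ then has even order and which drops out of the branch locus. Counting odd-order points one gets at most $10$ branch points, and generically exactly $8$: the point $(0,0)$, the point $\Pc=(-m^2(r^2-s^2),\,sm^2(m+1)(r^2-s^2))$, two further zeros of the second factor, three further zeros of the third, and $\infty$. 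Your own formula $g=1+\tfrac12\#B$ then gives $g\le 6$, and generically $g=5$ --- never $9$. So the proposal, carried out correctly, does not prove the lemma; it computes the geometric genus of the normalization of the double-cover model and produces a value strictly smaller than the stated one, whereas the paper's argument is a degree/adjunction computation on a projective model that is insensitive to the singular points lying over $(0,0)$, over the shared zero, and at infinity. You need to confront this discrepancy explicitly rather than defer it to a squarefreeness check or to a computer-algebra call (a genus command also returns the function-field genus and so would not confirm $9$ either); note that for the way the lemma is used (Faltings' theorem) only $g\ge 2$ is required, and that much your computation does establish.
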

\begin{proof}
	Let  $\omega$  be  the canonical sheaf of $\Cc_\ta^m$.
	By Exercise (II.8.4.e) in \cite{hart}, it is isomorphic to the invertible sheaf $ \Oo(4)$,
	and hence	$\deg (\omega)= 4 \cdot \deg(\Cc_\ta^m).$
	Using the classical version of the Bezout's theorem, see Proposition 8.4 in \cite{fulton1} or Example 1 on page 198 of \cite{shaf1}, the degree of  $\Cc_\ta^m$ over  $\CC$ is equal to $4$ and so
	$\deg (\omega)= 16$.
	As a consequence of the Riemann-Roch theorem, 	it is well known that the degree of canonical sheaf of
	any algebraic curve of genus~$g$ is equal to $2g-2$, see Example 1.3.3 in Chapter IV of \cite{hart}.
	Therefore, the genus of  $\Cc_\ta^m$ is equal to  $9$.
\end{proof}

\subsection{Proof of Theorem~\ref{main8}}

Putting $\Zc=(m+1) \X (\X + r^2-s^2) y/(2n(r^2-s^2))$ and using the variable changes given by \eqref{trans1}, the simultaneous
equations defining $E_\ta^{(m,n)}$ and $G_\ta^{(m,n)}$ give the space curve $\Cc_\ta^{(m,n)}$ given in the statement of Theorem~\ref{main8}.
On the other hand, it is easy to  check that there exists a solution $(x,y,z)$  of Equation~\eqref{eq6a}  if and only if $\Cc_\ta^{(m,n)}$  has a solution $(\X,\Y,\Zc)$ with $\Y \neq 0$.
This shows the part~(i) of Theorem~\ref{main8}. By a similar argument as given in the proof of Lemma~\ref{genus},   one can  show that  $\Cc_\ta^{(m,n)}$ is an algebraic curve of  genus~$9$.
Hence,  it contains only finitely many rational solutions. Therefore, applying Theorem~\ref{main3} gives us the part~(ii) of  Theorem~\ref{main8}.

\subsection{Proof of Theorem~\ref{main9}}
Since we have supposed that $\ta$ is a Pythagorean angle with $\cos(\ta)=s/r$, its sine is rational number, i.e.,  $\sin(\ta)= \sqrt{r^2-s^2}/r \in \Q^*$.
First, we prove the  part~(i) by  the following two steps.
\bigskip

{\bf  Step 1:} {\it Given a natural number $n$, if $(c, e, f)$ is a rational triangle with area
	$T=2 n \sqrt{r^2-s^2} $,  then
	there exists a $\ta$-parallelogram envelope  $(a/2, b/2, c/2, d/2, e/2)_{\ta}$ for $n$, where
\begin{equation}
\label{eqt-1}
a  =\frac{4 r n}{f},\quad
b   = \frac{f^2 + c^2 - e^2}{2f} +  \frac{a s}{r}, \quad
d  = \frac{f^2 + e^2- c^2}{2f} -  \frac{a s}{r}
\end{equation}}

Without losing the generality, we may assume that $c< e <f$ holds for the sides of the triangle.
In order to see that the quintuple $(a, b, c, d, e)$ is $\ta$-parallelogram envelope for
 $4 n$, it is enough to check that Equations~\eqref{eq4} hold.
It is clear that $a+b=f$ and  hence $a(b+d)=a f = 4 r n$, which is the last equality in \eqref{eq4}.
Substituting $b$ and $d$, as given above, into the first and second equations of \eqref{eq4}
shows that
\begin{align*}
a^2+b^2  -c^2 - \frac{2 s a b}{r} &=a^2+d^2  -e^2 + \frac{2 s a d}{r}\\
&=64 n^2 (r^2-s^2)+(c+e+f)(c+e-f)(c-e-f)(c-e+f).
\end{align*}
Letting $p=(c+e+f)/2$ and  using the Heron's formula, we have
\begin{align*}
2n \sqrt{r^2-s^2} & = \sqrt{p(p-c)(p-e)(p-f)}\\
&=\frac{1}{4}\sqrt{(c+e+f)(c+e-f)(e+f-c)(c-e+f)},
\end{align*}
in  other words,
$$64 n^2 (r^2-s^2)+(c+e+f)(c+e-f)(c-e-f)(c-e+f)=0.$$
Hence, the quintuple $(a, b, c, d, e)$ is $\ta$-parallelogram envelope for $4 n$.
Dividing by $2$, we obtain   $(a/2, b/2, c/2, d/2, e/2)_{\ta}$ for $n$ as desired.
\bigskip

{\bf  Step 2:} {\it For any  positive rational number $T$ there exists a rational triangle  with area $T$.}

Indeed, this fact is proved by N.~J.~Fine in  Theorem~2 of \cite{fine}.
 Let us consider  the following  genus~one quartic curve
\begin{equation*}
\label{eqt3}
C_T: y^2= T^2 x^4+ T^2 x^3 -x -1.
\end{equation*}
Then, for any rational point $P=(x,y)$  with nonzero coordinates, we have a rational triangle with area $T$ given by
\begin{equation*}
\label{eqt1}
(c, e, f)=\left(\frac{y}{x},  \frac{T^2 x^2 +1}{y},
 \frac{T^2 x^4 +1}{x y} \right).
\end{equation*}
Indeed, if we let $\tau$ be the angle between  $c$ and $e$, then  using the equation of $C_T$ we get that
$$
\cos(\ta)= \frac{c^2+e^2-f^2}{2 c e}={\frac {{T}^{2}{x}^{2}-1}{{T}^{2}{x}^{2}+1}},\quad
\sin(\ta)={\frac {2 xT}{{T}^{2}{x}^{2}+1}},
$$
and hence  the area of triangle $(c, e,f)$  is equal to
$ c e \sin(\ta)/2=T,$ as desired.

Now, clearly we have  $\Nc_{\ta} \subseteq \N$. On the other hand, given any natural number $n$ and any Pythagorean angle $\ta \in (0,\pi/2]$ with $\cos(\ta)=s/r$ and $\sin(\ta) \in \Q^*$, letting  $T=  2 n \sqrt{r^2-s^2}$ in the above arguments leads to a  rational triangle $(c, e, f)$ and hence
$\ta$-parallelogram  envelope  $(a/2, b/2, c/2, d/2, e/2)_{\ta}$ for $n$ by Step~1.
Thus, the proof of part~(i) is completed.

In order to prove the part~(ii), we shall to investigate a little bit more on the curve $C_T$.
Clearly, it contains the rational point  $P_0=(-1,0)$ and hence
one can easily check that it  is birational to the elliptic curve,
\begin{equation*}
\label{eqt4}
E_T: Y^2= X^3+ 3 T^2 X - T^2 (T^2-1),
\end{equation*}
by the following  maps
\begin{equation*}
\label{eqt5}
\left\{
\begin{array}{ll}
\hspace{-.1cm} x =\displaystyle\frac{X+1}{T^2-X}, &  y=\displaystyle\frac{(T^2+1)Y}{(T^2-X)^2} \vspace{.2cm}\\
\hspace{-.1cm} X =\displaystyle\frac{T^2 x- 1}{x+1}, & Y=\displaystyle\frac{(T^2+1)y}{(x+1)^2}.
\end{array}
\right.
\end{equation*}
We note that the point $P_0=(-1,0)$ on $C_T$ corresponds to the point at  infinity on $E_T$.
 It is easy to see that  $Q_0=(T^2, T(T^2+1))$ is a point on $E_T$ corresponding to the point at infinity on $C_T$.
 Moreover, one can easily check that the point
$$Q_1= \displaystyle \left(\frac{T^2}{4}, \frac{T(T^2-8)}{8}\right) $$
is another point on $C_T$ corresponding  to the  non-torsion point
$$P_1= \displaystyle \left(\frac{T^2+4}{3 T^2}, \frac{2(T^2+1)(T^2-8)}{9 T^3}\right). $$
The point $P_1$ gives us the following triangle
\begin{equation*}
\label{eqt7}
(c,e,f)_1= \displaystyle \left( \frac{T(T^2 +16)}{ 2 (T^2-8)} ,
 \frac{T^6 + 96 T^4 + 256}{ 6 T (T^2-8)(T^2+ 4)},\frac{2 (T^2+ 1)(T^2-8)}{ 3 T (T^2+ 4)}\right).
\end{equation*}
In order to have a rational triangle with positive sides satisfying the condition $c< e <f$, we may change their rules of $c, e,$ and $f$ or replace  $-P_1$ with $P_1$ if necessary.
Then, for any integer $\ell \geq 1$, we let $P_\ell=[\ell]P_1$ be the $\ell $-th
 multiple of $P_1$ and denote its
corresponding rational triangle by  $(c,e,f)_\ell$.
Letting  $ T = 2 n \sqrt{r^2-s^2}$  and  using  Equations~\eqref{eqt-1}
 one can obtain infinity many  distinct  $\ta$-parallelogram  envelopes
for each $n\geq 1$ and for any Pythagorean angle $\ta \in (0,\pi/2]$ with $\cos(\ta)=s/r$.
Therefore,  the part~(ii) is proved.

The last part is a direct consequence of the part~(ii) if we define  $m=b/d$  for the infinitely many $\ta$-parallelogram  envelopes  corresponding to
the points $[\ell]P_1$ for $\ell\geq1$.
\begin{rem}
For $\ta=\pi/2$ and another angle  $\ta$ with $\cos(\ta)=3/5$, using the proof of Theorem~\ref{main9},  we provide some  $\ta$-parallelogram envelopes for square-free natural numbers $1\leq n\leq 50$ in  Tables~\ref{tab-1} and \ref{tab-2}, respectively.
In those tables, the bold numbers are neither $\ta$- and nor $\pi-\ta$-congruent numbers.
\end{rem}
\begin{exam}
	\label{exam1}
	For $\ta=\pi/3$, all the  square-free natural numbers $1\leq n \leq 50$ which are neither $\pi/3$- nor $2\pi/3$-congruent numbers
	but belong to  $\Nc_{\pi/3}$ are $n=2, 3, 7, 26, 31,$ and $38.$
	See Table~\ref{tab:example} in the Appendix for corresponding $\pi/3$-parallelogram envelopes for all square-free natural numbers $1\leq n \leq 50$, which
	are found by an ad hoc searching for the  components.
\end{exam}

\section*{Acknowledgments}
The first named author would likes to thank for the hospitality of Institute of Advanced Studies in Basic Sciences (IASBS) during his sabbatical year as a postdoctoral researcher  supported by the Iranian National Elites Foundation.
The second named author also thanks for the hospitality and partial financial support of IASBS.


\begin{thebibliography}{99}
\bibitem {djps}
A. Dujella,  A. S. Janfada,  C. J. Peral,  S. Salami,
{On the  high rank $\pi/3$ and $2\pi/3$-congruent number elliptic curves},
Rocky Mountain J.  Math. \textbf{44} (2014) 1867--1880.

\bibitem {fine}
N. J. Fine, {On rational triangles}, Amer. Math. Month. \textbf{83} (1976) 517--521.

\bibitem {fujw1}
M. Fujiwara, {$\ta$-congruent numbers}, in: Number Theory,
K. Gy\H{o}ry, A. Peth\H{o} and  V. S\'os (eds.), de Gruyter (1997) 235--241.

\bibitem {fujw2}
M. Fujiwara, Some properties of  $\ta$-congruent numbers,
Natur. Sci. Rep. Ochanomizu Univ. \textbf{52} (2002) 1--8.

\bibitem {fulton1} W. Fulton, {Intersection Theory},  Second edition.
 Springer-Verlag, Berlin (1998).

\bibitem{j-s} A. S. Janfada, S. Salami, {On $\ta$-congruent numbers on real quadratic number fields}, Kodai Math. J. \textbf{38} (2015) 352--364.

\bibitem {hart} R. Hartshorne, {Algebraic Geometry}, GTM \textbf{52}, Springer-Verlag, New York (1977).

\bibitem {slm3} H. Hindry, J. H. Silverman, {Diophantine Geometry: An Introduction},
GTM, \textbf{201}, Springer-Verlag, New York (2001).

\bibitem {kan1}
M. Kan, {$\ta$-congruent numbers and elliptic curves}, Acta Arith. \textbf{94} (2000) 153--160.

\bibitem {kob1}
N. Koblitz, {Introduction to Elliptic Curves and Modular Forms},
GTM \textbf{97}, 2nd ed, Springer-Verlag, Berlin (1993).

\bibitem {Moranki}
Y.  Moranki, {Adaptation of Monsky matrices for $\ta$-congruent numbers}, Int. J. Number Theory
\textbf{16} (2020) 154--170.

\bibitem {Ochi}
T. Ochiai, {New generalizations of congruent numbers},
J. Number Theory, \textbf{193} (2018) 154--170.

\bibitem {shaf1}
I. R. Shafarevich, {Basic Algebraic Geometry}, 
Springer-Verlag, Berlin-New York (1977).

\bibitem {slm1}
J. H. Silverman, {The Arithmetic of Elliptic Curves}, GTM \textbf{106},
Springer-Verlag, Berlin (2009).

\bibitem{topyui}
J. Top, N. Yui, {Congruent number problems and their variants},
in: Algorithmic number theory, Math. Sci. Res. Inst. Publ. \textbf{44} (2008) 613--639.

\bibitem {ShYsh1}
S.-I.  Yoshida, {Some variant of the congruent number problem, I},
Kyushu J. Math. \textbf{55} (2001) 387--404.

\bibitem {ShYsh2}
S.-I. Yoshida, {Some variant of the congruent number problem, II},
Kyushu J. Math. \textbf{56} (2002) 147--165.

\end{thebibliography}
\section*{Appendix}
	\begin{table}[h]
\caption{More data on the elliptic curve $\mathcal G_\ta^m$}
\label{tab:data}  
		\begin{center}
			\begin{tabular}{|l|c|l|c|l|c|l|}
				\hline
				& \multicolumn{2}{c|}{$(r,s)=(2,1)$} & \multicolumn{2}{c|}{$(r,s)=(3,1)$} &
				\multicolumn{2}{c|}{$(r,s)=(4,1)$}\\ \hline
				$m$   & $r_\ta(m)$ & $\Gg_{\ta^m}(\Q)_{\rm{tors}}$   & $r_\ta(m)$ & $\Gg_{\ta^m}(\Q)_{\rm{tors}}$  & $r_\ta(m)$ & $\Gg_{\ta^m}(\Q)_{\rm{tors}}$ \\ \hline
				${1}/{2}$ & 1 & $\Z/4\Z$ & 0 & $\Z/8\Z$ & 1 & $\Z/4\Z$ \\
				${1}/{3}$ & 0 & $\Z/8\Z$ & 1 & $\Z/4\Z$ & 1 & $\Z/4\Z$ \\
				${1}/{4}$ & 1 & $\Z/4\Z$ & 1 & $\Z/4\Z$ & 1 & $\Z/4\Z$ \\
				${1}/{5}$ & 1 & $\Z/4\Z$ & 1 & $\Z/4\Z$ & 2 & $\Z/4\Z$ \\
				${1}/{6}$ & 2 & $\Z/4\Z$ & 1 & $\Z/4\Z$ & 1 & $\Z/4\Z$ \\
				${1}/{7}$ & 1 & $\Z/4\Z$ & 2 & $\Z/4\Z$ & 1 & $\Z/4\Z$ \\
				${1}/{8}$ & 1 & $\Z/4\Z$ & 1 & $\Z/4\Z$ & 1 & $\Z/4\Z$ \\
				${1}/{9}$ & 1 & $\Z/4\Z$ & 1 & $\Z/4\Z$ & 2 & $\Z/4\Z$ \\
				${1}/{10}$ & 1 & $\Z/4\Z$ & 1 & $\Z/4\Z$ & 1 & $\Z/4\Z$\\
				${2}/{3}$ & 1 & $\Z/4\Z$ & 1 & $\Z/4\Z$ & 1 & $\Z/4\Z$ \\
				${2}/{5}$ & 1 & $\Z/4\Z$ & 1 & $\Z/2\Z\times\Z/4\Z$ & 1 & $\Z/4\Z$ \\
				${2}/{7}$ & 1 & $\Z/4\Z$ & 1 & $\Z/4\Z$ & 1 & $\Z/4\Z$ \\
				${2}/{9}$ & 1 & $\Z/4\Z$ & 1 & $\Z/4\Z$ & 1 & $\Z/4\Z$ \\
				${3}/{4}$ & 1 & $\Z/4\Z$ & 1 & $\Z/4\Z$ & 1 & $\Z/2\Z\times\Z/4\Z$ \\
				${3}/{5}$ & 1 & $\Z/4\Z$ & 1 & $\Z/4\Z$ & 0 & $\Z/8\Z$ \\
				${3}/{7}$ & 2 & $\Z/4\Z$ & 2 & $\Z/4\Z$ & 1 & $\Z/4\Z$ \\
				${3}/{8}$ & 1 & $\Z/2\Z\times\Z/4\Z$ & 2 & $\Z/4\Z$ & 2 & $\Z/4\Z$ \\
				${3}/{10}$ & 1 & $\Z/4\Z$ & 1 & $\Z/4\Z$ & 1 & $\Z/4\Z$  \\
				${4}/{5}$ & 1 & $\Z/4\Z$ & 1 & $\Z/4\Z$ & 1 & $\Z/4\Z$  \\
				${4}/{7}$ & 1 & $\Z/4\Z$ & 1 & $\Z/4\Z$ & 1 & $\Z/2\Z\times\Z/4\Z$ \\
				${4}/{9}$ & 2 & $\Z/4\Z$ & 1 & $\Z/4\Z$ & 1 & $\Z/4\Z$ \\
				${5}/{6}$ & 2 & $\Z/4\Z$ & 1 & $\Z/4\Z$ & 1 & $\Z/4\Z$ \\
				${5}/{7}$ & 1 & $\Z/4\Z$ & 1 & $\Z/4\Z$ & 1 & $\Z/4\Z$ \\
				${5}/{8}$ & 1 & $\Z/2\Z\times\Z/4\Z$ & 2 & $\Z/4\Z$ & 2 & $\Z/4\Z$  \\
				${5}/{9}$ & 1 & $\Z/4\Z$ & 1 & $\Z/2\Z\times\Z/4\Z$ & 1 & $\Z/4\Z$ \\
				${6}/{7}$ & 1 & $\Z/4\Z$ & 2 & $\Z/4\Z$ & 2 & $\Z/4\Z$ \\
				${7}/{8}$ & 2 & $\Z/4\Z$ & 1 & $\Z/4\Z$ & 2 & $\Z/4\Z$ \\
				${7}/{9}$ & 1 & $\Z/4\Z$ & 1 & $\Z/4\Z$ & 1 & $\Z/4\Z$ \\
				${7}/{10}$ & 1 & $\Z/4\Z$ & 1 & $\Z/2\Z\times\Z/4\Z$ & 1 & $\Z/4\Z$ \\
				${8}/{9}$ & 2 & $\Z/4\Z$ & 1 & $\Z/4\Z$ & 1 & $\Z/4\Z$ \\
				${9}/{10}$ & 2 & $\Z/4\Z$ & 1 & $\Z/4\Z$ & 2 & $\Z/4\Z$ \\ \hline
			\end{tabular}
		\end{center}
	\end{table}
\begin{table}[htbp]
\caption{Some $\pi/2$-parallelogram envelopes for	 square-free $1\leq n\leq 50$}
\label{tab-1}
	\begin{tabular}{ll}
		\toprule
		$n$ & $(a,b,c,d,e)_{\frac{\pi}{2}}$  \\
		\midrule
	  ${\bf 1}$ & $\left(\frac{2}{5},{\frac{7}{60}},{\frac{5}{12}},{\frac{143}{60}},{\frac{29}{12}}
		 \right)$\\ \midrule
		${{\bf 2}}$ & $\left(\frac{1}{2},{\frac{4}{15}},{\frac{17}{30}},{\frac{56}{15}},{\frac{113}{30}} \right)$ \\ \midrule
		${{\bf 3}}$ & $\left({\frac{14}{13}},{\frac{2233}{2340}},{\frac{259}{180}},{\frac{29999}{
				16380}},{\frac{2677}{1260}}\right)$\\ \midrule
		$5$ & $\left({\frac{46}{29}},{\frac{57017}{22620}},{\frac{2323}{780}},{\frac{
					328559}{520260}},{\frac{30629}{17940}}\right)$\\ \midrule
		$6$ & $\left({\frac{17}{10}},{\frac{5474}{1665}},{\frac{2465}{666}},{\frac{6842}{
				28305}},{\frac{19441}{11322}}	\right)$ \\ \midrule
		$7$ & $\left({\frac{14700}{9259}},{\frac{15470431}{19443900}},{\frac{175277}{98700
		}},{\frac{66913}{18518}},{\frac{371}{94}}\right)$ \\ \midrule
		${\bf 10}$ & $\left({\frac{30300}{19649}},{\frac{83810401}{59536470}},{\frac{310001}{
				148470}},{\frac{99760}{19649}},{\frac{260}{49}}	\right)$ \\ \midrule
	${\bf 11}$ & $\left({\frac{88572}{57715}},{\frac{742637383}{464721180}},{\frac{2122949}{
			958188}},{\frac{642917}{115430}},{\frac{1375}{238}}\right)$ \\ \midrule
		$13$ & $\left({\frac{172380}{113059}},{\frac{2949868471}{1499162340}},{\frac{
				5512277}{2214420}},{\frac{1483027}{226118}},{\frac{2249}{334}}\right)$ \\ \midrule
		$14$ & $\left({\frac{115836}{76145}},{\frac{1353665377}{630023730}},{\frac{2112881}
			{802578}},{\frac{537152}{76145}},{\frac{700}{97}}\right)$ \\ \midrule
		$15$ & $\left({\frac{305100}{200923}},{\frac{9512390479}{4086773820}},{\frac{
				12605629}{4535820}},{\frac{3034185}{401846}},{\frac{3435}{446}}	\right)$ \\ \midrule
		${\bf 17}$ & $\left({\frac{502860}{332059}},{\frac{26336096911}{9822305220}},{\frac{
				26142077}{8489460}},{\frac{5674583}{664118}},{\frac{4981}{574}}	\right)$ \\ \midrule
		${\bf 19}$ & $\left({\frac{784092}{518755}},{\frac{64879403383}{21407981340}},{\frac{
				50173589}{14815212}},{\frac{9897613}{1037510}},{\frac{6935}{718}}\right)$ \\ \midrule
		$21$ & $\left({\frac{1169532}{774835}},{\frac{145721314183}{43152110820}},{\frac{
				90433669}{24448788}},{\frac{16327227}{1549670}},{\frac{9345}{878}}	\right)$ \\ \midrule
		$22$ & $\left({\frac{704220}{466817}},{\frac{53034857569}{14942812170}},{\frac{
					29750513}{7714410}},{\frac{5150992}{466817}},{\frac{2684}{241}}	\right)$ \\ \midrule
		$23$ & $\left({\frac{1682220}{1115659}},{\frac{303628332991}{81599299260}},{\frac{
				154752077}{38544780}},{\frac{25733297}{2231318}},{\frac{12259}{1054}}\right)$ \\ \midrule
		${\bf 26}$ & $\left({\frac{1372956}{911585}},{\frac{203809927777}{48137157510}},{\frac{
				79970801}{17795622}},{\frac{11877008}{911585}},{\frac{4420}{337}}\right)$ \\ \midrule
		$29$ & $\left({\frac{4248732}{2823235}},{\frac{1962339124183}{413626513380}},{\frac
			{611798069}{122920212}},{\frac{82020323}{5646470}},{\frac{24505}{1678}}	\right)$ \\ \midrule
		$30$ & $\left({\frac{2432700}{1616849}},{\frac{644258613601}{131110285410}},{\frac{
				187110001}{36409410}},{\frac{24293280}{1616849}},{\frac{6780}{449}}
		\right)$ \\ \midrule
		$31$ & $\left({\frac{5546892}{3687355}},{\frac{3353898590383}{659785804860}},{\frac
			{909668189}{171595788}},{\frac{114486937}{7374710}},{\frac{29915}{1918}}
		\right)$ \\ \midrule
		${\bf 33}$ & $\left({\frac{7122060}{4736059}},{\frac{5541717534511}{1022136253380}},{
			\frac{1319930077}{234596340}},{\frac{156505767}{9472118}},{\frac{36069
			}{2174}} \right)$ \\ \midrule
		$34$ & $\left({\frac{4012476}{2668625}},{\frac{1760698855777}{314935110750}},{\frac
			{394219121}{68094078}},{\frac{45425632}{2668625}},{\frac{9860}{577}}
		\right)$ \\ \midrule
		${\bf 35}$ & $\left({\frac{9011100}{5993923}},{\frac{8888050331479}{1543195415580}},{
			\frac{1874280629}{314873580}},{\frac{210044765}{11987846}},{\frac{
				43015}{2446}}\right)$ \\ \midrule
		$37$ & $\left({\frac{11253180}{7487059}},{\frac{13883190600871}{2277114124260}},{
			\frac{2610706277}{415759380}},{\frac{277325323}{14974118}},{\frac{
				50801}{2734}}\right)$ \\ \midrule
		$38$ & $\left({\frac{6259740}{4165217}},{\frac{4298876938849}{686136196410}},{\frac
			{765244913}{118770330}},{\frac{79221488}{4165217}},{\frac{13756}{721}}
		\right)$ \\ \midrule
		$39$ & $\left( {\frac{13889772}{9243115}},{\frac{21179374244383}{3291916921020}},{
			\frac{3574266349}{540988812}},{\frac{360837633}{18486230}},{\frac{
				59475}{3038}}
		 \right)$ \\ \midrule
		$41$ & $\left( {\frac{16964652}{11291275}},{\frac{31631011985983}{4672013439300}},{
			\frac{4817922509}{694723188}},{\frac{463356047}{22582550}},{\frac{
				69085}{3358}}\right)$ \\ \midrule
	${\bf 42}$ & $\left({\frac{9340380}{6217217}},{\frac{9593473907809}{1382646888630}},{
		\frac{1390928113}{195925590}},{\frac{130672752}{6217217}},{\frac{18564
		}{881}}\right)$ \\ \midrule
		${\bf 43}$ & $\left({\frac{20523900}{13662259}},{\frac{46341994593031}{6520996220700}},{
			\frac{6403414277}{881573100}},{\frac{587954437}{27324518}},{\frac{
				79679}{3694}}\right)$ \\ \midrule
		$46$ & $\left({\frac{13438716}{8947505}},{\frac{19893681780577}{2613977795730}},{
			\frac{2395438961}{308798322}},{\frac{205938688}{8947505}},{\frac{24380}{1057}}	\right)$ \\ \midrule
		$47$ & $\left({\frac{29291340}{19503259}},{\frac{94544614798351}{12154821073980}},{
			\frac{10896327677}{1375446540}},{\frac{917276393}{39006518}},{\frac{
				104011}{4414}}\right)$ \\
		\bottomrule
	\end{tabular}
\end{table}
\begin{table}[htbp]
\caption{Some $\ta$-parallelogram envelopes for square-free $1\leq n\leq 50$, where $\cos(\ta)=3/5$}
\label{tab-2}
	\begin{tabular}{lccl}
		\toprule
		$n$ & $[r_1,t_1]$ & $[r_2,t_2]$ & $(a,b,c,d,e)_{\ta}$  with $\cos(\ta)=3/5$ \\
		\midrule
		$1$ & $[0,4]$ & $[1,4]$ & $\left({\frac{204}{91}},{\frac{25085}{18564}},{\frac{2561}{1428}},{\frac{80}	{91}},{\frac{20}{7}}\right)$\\ \midrule
		${\bf 2}$ & $[0,4]$ & $[0,4]$ & $\left({\frac{15600}{7967}},{\frac{27143809}{12428520}},{\frac{90113}{48360}},{\frac{23288}{7967}},{\frac{136}{31}}\right)$\\ \midrule
		$3$ & $[1,4]$ & $[0,4]$ & $\left({\frac{78300}{40967}},{\frac{626820049}{213847740}},{\frac{870913}{	 370620}},{\frac{201432}{40967}},{\frac{444}{71}}\right)$\\ \midrule
		$5$ & $[0,4]$ & $[1,4]$ & $\left({\frac{601500}{318599}},{\frac{33244215601}{7665491940}},{\frac{16960001}{4787940}},{\frac{2837120}{318599}},{\frac{2020}{199}}\right)$\\ \midrule
		$6$ & $[1,4]$ & $[0,4]$ &  $\left({\frac{249264}{132307}},{\frac{3943947275}{785223144}},{\frac{49766401}{11923128}},{\frac{206040}{18901}},{\frac{3480}{287}}\right)$ \\ \midrule
		$7$ & $[1,4]$ & $[0,4]$ & $\left({\frac{2307900}{1226567}},{\frac{461284727569}{80879827980}},{\frac{124160513}{25782540}},{\frac{15820168}{1226567}},{\frac{5516}{391}}\right)$ \\ \midrule
		$10$ & $[1,4]$ & $[0,4]$ & $\left({\frac{9606000}{5114399}},{\frac{7594358198401}{982578335880}},{\frac	 {1039360001}{153503880}},{\frac{96620440}{5114399}},{\frac{16040}{799}}\right)$ \\ \midrule
		$11$ & $[0,4]$ & $[1,4]$ & $\left({\frac{2812524}{1497883}},{\frac{3217536636125}{382984716972}},{\frac	 {1836567041}{247246428}},{\frac{31291480}{1497883}},{\frac{21340}{967}}	\right)$ \\ \midrule
		$13$ & $[1,4]$ & $[0,4]$ & $\left({\frac{27428700}{14613767}},{\frac{8583302528647}{880959628380}},{\frac{4986522113}{570094980}},{\frac{51958816}{2087681}},{\frac{35204}
			{1351}}\right)$ \\ \midrule
		${\bf 14}$ & $[0,4]$ & $[0,4]$ & $\left({\frac{7378224}{3931603}},{\frac{107881530976129}{10360088433240}},{\frac{7769251841}{825834072}},{\frac{528553256}{19658015}},{\frac{
				43960}{1567}}\right)$ \\ \midrule
		$15$ & $[0,4]$ & $[1,4]$ & $\left({\frac{48613500}{25907399}},{\frac{186109696832401}{16792657883820}},	 {\frac{11741760001}{1166075820}},{\frac{748377960}{25907399}},{\frac{
				54060}{1799}}\right)$ \\ \midrule
		$17$ & $[1,4]$ & $[0,4]$ & $\left({\frac{80197500}{42746567}},{\frac{500956576117489}{40331385964500}},	 {\frac{24845158913}{2180428500}},{\frac{1405736528}{42746567}},{\frac{
				78676}{2311}}\right)$ \\ \midrule
		$19$ & $[0,4]$ & $[1,4]$ & $\left({\frac{25025964}{13340827}},{\frac{1208792909726929}{87859751640060}}, {\frac{48375155201}{3802629372}},{\frac{2460337816}{66704135}},{\frac
			{109820}{2887}}\right)$ \\ \midrule
		$21$ & $[2,4]$ & $[1,4]$ & $\left({\frac{37345644}{19909915}},{\frac{106891722534361}{7081415215812}}, {	 \frac{88123230721}{6272289828}},{\frac{162797376}{3981983}},{\frac{
				148260}{3527}}\right)$ \\ \midrule
		$22$ & $[0,4]$ & $[1,4]$ &  $\left({\frac{224914800}{119911967}},{\frac{3864726317451649}{
				245181600685560}},{\frac{116460838913}{7914956280}},{\frac{5142200008}
			{119911967}}, {\frac{170456}{3871}}\right)$ \\ \midrule
		$23$ & $[1,4]$ & $[0,4]$ & $\left({\frac{268679100}{143248967}},{\frac{5498971329569809}{
				334678291560780}},{\frac{152018586113}{9885054540}},{\frac{6429413192}
			{143248967}}, {\frac{194764}{4231}}\right)$ \\ \midrule
		$26$ & $[1,4]$ & $[0,4]$ & $\left({\frac{87747504}{46786771}},{\frac{2910709595735885}{157900860594984}}, {\frac{317031669761}{18248105928}},{\frac{2380601080}{46786771}},{
			\frac{281320}{5407}}\right)$ \\ \midrule
		$29$ & $[0,4]$ & $[1,4]$ & $\left({\frac{135808044}{72416155}},{\frac{34652540443428529}{
				1695637304232900}}, {\frac{610185464321}{31502783172}},{\frac{
				20595602336}{362080775}},{\frac{390340}{6727}}\right)$ \\ \midrule
		${\bf 30}$ & $[0,4]$ & $[0,4]$ & $\left({\frac{777654000}{414669599}},{\frac{45368631579081601}{
				2149796482271640}},{\frac{747740160001}{37322207640}},{\frac{
				24416175720}{414669599}},{\frac{432120}{7199}}\right)$ \\ \midrule
		$31$ & $[2,4]$ & $[1,4]$ & $\left({\frac{177327564}{94557787}},{\frac{11775813401066525}{
				540893613740028}},{\frac{910222297601}{43971515628}},
	{\frac{5756758280	}{94557787}},{\frac{476780}{7687}}\right)$ \\ \midrule
		$33$ & $[1,4]$ & $[0,4]$ & $\left({\frac{1138549500}{607130567}},
		{\frac{96798236787579889}{4189383051470100}},{\frac{1324284774913}{60108513300}},{\frac{39390937872}{607130567}},{\frac{575124}{8711}}\right)$ \\ \midrule
		$34$ & $[1,4]$ & $[1,4]$ & $\left({\frac{256590384}{136827859}},{\frac{17534322811061047}{
				737578001737560}},{\frac{1583932334081}{69785037672}},{\frac{
				6536496328}{97734185}},{\frac{629000}{9247}}\right)$ \\ \midrule
		$35$ & $[0,4]$ & $[1,4]$ & $\left({\frac{1440673500}{768251399}},{\frac{154572690545960401}{6324568182155580}},{\frac{1884688960001}{80669483580}},{\frac{
				52917310040}{768251399}},{\frac{686140}{9799}}\right)$ \\ \midrule
		$37$ & $[1,4]$ & $[2,4]$ & $\left({\frac{1799276700}{959493767}},{\frac{240522746409818929}{9331863668963940}},{\frac{2630182554113}{106507454820}},{\frac{
				69927835648}{959493767}},{\frac{810596}{10951}}\right)$ \\ \midrule
		$38$ & $[0,4]$ & $[1,4]$ & $\left({\frac{2001817200}{1067508767}},{\frac{297391044346297729}{11247144268059960}},{\frac{3086401626113}{121699949880}},{\frac{
				79934843912}{1067508767}},{\frac{878104}{11551}}\right)$ \\ \midrule
		$39$ & $[0,4]$ & $[1,4]$ & $\left({\frac{444198924}{236879323}},{\frac{365692973639290129}{13489941076211340}},{\frac{3606747056641}{138578674572}},{\frac{91056034056}{1184396615}},{\frac{949260}{12167}}\right)$ \\ \midrule
		$41$ & $[2,4]$ & $[1,4]$ & $\left({\frac{542566284}{289339099}},{\frac{15557821814308955}{
				546988291847868}},{\frac{4868447111681}{177948507828}},{\frac{
				3343084240}{41334157}},{\frac{1102900}{13447}}\right)$ \\ \midrule
		$42$ & $[0,4]$ & $[1,4]$ & $\left({\frac{2987334000}{1593089567}},{\frac{659694548714076289}{22662336326401800}},{\frac{5625548070913}{200734619400}},{\frac{
				132034235928}{1593089567}},{\frac{1185576}{14111}}\right)$ \\ \midrule
		$43$ & $[1,4]$ & $[0,4]$ & $\left({\frac{3282159900}{1750322567}},{\frac{795640144838322769}{26720179262662620}},{\frac{6478327040513}{225797335260}},{\frac{
				148566077752}{1750322567}},{\frac{1272284}{14791}}\right)$ \\ \midrule
		${\bf 46}$ & $[0,4]$ & $[0,4]$ & $\left({\frac{859696944}{458467795}},{\frac{54456420261173161}{
				1713666792538776}},{\frac{9708557393921}{316349786328}},{\frac{
				8333017576}{91693559}},{\frac{1557560}{16927}}\right)$ \\ \midrule
		$47$ & $[1,4]$ & $[0,4]$ & $\left({\frac{4684626300}{2498272967}},{\frac{1615810660236674449}{49802022322498860}},{\frac{11045411686913}{352263963180}},{\frac{232036850408}{2498272967}},{\frac{1661356}{17671}}\right)$ \\
		\bottomrule
	\end{tabular}
\end{table}
\begin{table}[htbp]
 \caption{Some $\pi/3$-parallelogram envelopes for square-free	$1\leq n\leq 50$}
	\label{tab:example}
	\begin{tabular}{lccl}
			\toprule
		$n$ & $[r_1,t_1]$ & $[r_2,t_2]$ & $(a,b,c,d,e)_{\pi/3}$  \\
			\midrule
		$1$ &  $[0,8]$ & $[0,4]$ & $\left(\frac{5}{11}, \frac{3}{8}, \frac{37}{88}, \frac{161}{40}, \frac{1879}{440}\right)$\\ \midrule
		$\bf{2}$ &  $[0,4]$ & $[0,4]$ & $\left(\frac{360}{209}, \frac{567}{418}, \frac{657}{418}, \frac{9083}{9405}, \frac{22183}{9405}\right)$ \\ \midrule
		$\bf{3}$ &  $[0,4]$ & $[0,4]$ & $\left(2, \frac{5}{4}, \frac{7}{4}, \frac{7}{4}, \frac{13}{4}\right)$\\ \midrule
		$5$ & $[0,4]$ & $[1,4]$ & $\left(\frac{24}{7}, \frac{5}{2}, \frac{43}{14}, \frac{5}{12}, \frac{307}{84}\right)$ \\ \midrule
		$6$ & $[1,4]$ & $[0,4]$ & $\left(\frac{8}{5}, \frac{7}{6}, \frac{43}{30}, \frac{19}{3}, \frac{109}{15}\right)$ \\ \midrule
		${\bf{7}}$ & $[0,4]$ & $[0,4]$ & $\left(\frac{143}{42}, \frac{20}{7}, \frac{19}{6}, \frac{1256}{1001}, \frac{3583}{858}\right)$ \\ \midrule
		$10$ &  $[1,4]$ & $[1,4]$ & $\left(\frac{40}{7}, \frac{15}{7}, 5, \frac{19}{14}, \frac{13}{2}\right)$ \\ \midrule
		$11$ &  $[1,4]$ & $[0,4]$ & $\left(\frac{44}{15}, \frac{11}{10}, \frac{77}{30}, \frac{32}{5}, \frac{124}{15}\right)$ \\ \midrule
		$13$ & $[1,4]$ & $[0,4]$ & $\left(\frac{13}{3}, \frac{13}{8}, \frac{91}{24}, \frac{35}{8}, \frac{181}{24}\right)$\\ \midrule
		$14$ & $[0,4]$ & $[2,4]$ & $\left(\frac{14}{5}, \frac{7}{4}, \frac{49}{20}, \frac{33}{4}, \frac{199}{20}\right)$ \\ \midrule
		$15$ & $[0,4]$ & $[1,4]$ & $\left(6, \frac{9}{4}, \frac{21}{4}, \frac{11}{4}, \frac{31}{4}\right)$
		\\ \midrule
		$17$ & $[1,4]$ & $[1,4]$ & $\left(\frac{15}{4}, \frac{52}{15}, \frac{217}{60}, \frac{28}{5}, \frac{163}{20}\right)$ \\ \midrule
		$19$ &  $[0,4]$ & $[1,4]$ & $\left(\frac{57}{40}, \frac{9}{7}, \frac{381}{280}, \frac{533}{21}, \frac{21943}{840}\right)$ \\ \midrule
		$21$ & $[1,4]$ & $[1,4]$ & $\left(\frac{21}{4}, \frac{14}{5}, \frac{91}{20}, \frac{26}{5}, \frac{181}{20}\right)$ \\ \midrule
		$22$ & $[1,4]$ & $[1,4]$ & $\left(8, 3, 7, \frac{5}{2}, \frac{19}{2}\right)$
		\\ \midrule
		$23$ & $[1,4]$ & $[1,4]$ &  $\left(\frac{92}{35}, \frac{529}{220}, \frac{3887}{1540}, \frac{3321}{220}, \frac{25513}{1540}\right)$ \\ \midrule
		$\bf{26}$ & $[0,4]$ & $[0,4]$ & $\left(\frac{104}{15}, \frac{104}{15}, \frac{104}{15}, \frac{17}{30}, \frac{217}{30}\right)$ \\ \midrule
		$29$ & $[0,4]$ & $[1,4]$ & $\left(\frac{15}{4}, \frac{52}{15}, \frac{217}{60}, 12, \frac{57}{4}\right)$ \\ \midrule
		$30$ & $[1,4]$ & $[0,4]$ & $\left(\frac{60}{7}, \frac{75}{14}, \frac{15}{2}, \frac{23}{14}, \frac{19}{2}\right)$ \\ \midrule
		$\bf{31}$ & $[0,4]$ & $[0,4]$ & $\left(\frac{1860}{343}, \frac{10013}{2058}, \frac{31}{6}, \frac{11264}{1715}, \frac{52}{5}\right)$ \\ \midrule
		$33$ & $[0,4]$ & $[1,4]$ & $\left(\frac{60}{7}, \frac{128}{17}, \frac{964}{119}, \frac{29}{170}, \frac{10303}{1190}\right)$ \\ \midrule
		$34$ &  $[1,4]$ & $[1,4]$ & $\left(\frac{34}{15}, \frac{34}{15}, \frac{34}{15}, \frac{416}{15}, \frac{434}{15}\right)$ \\ \midrule
		$35$ & $[1,4]$ & $[0,4]$ & $\left(\frac{140}{13}, \frac{224}{39}, \frac{28}{3}, \frac{59}{78}, \frac{67}{6}\right)$ \\ \midrule
		$37$ & $[1,4]$ & $[0,4]$ & $\left(\frac{222}{35}, \frac{333}{140}, \frac{111}{20}, \frac{3901}{420}, \frac{817}{60}\right)$ \\ \midrule
		$\bf{38}$ & $[0,4]$ & $[0,4]$ &  $\left(\frac{3}{2}, \frac{4}{5}, \frac{13}{10}, \frac{748}{15}, \frac{1519}{30}\right)$ \\ \midrule
		$39$ & $[2,4]$ & $[1,4]$ & $\left(\frac{65}{6}, \frac{325}{48}, \frac{455}{48}, \frac{103}{240}, \frac{2653}{240}\right)$ \\ \midrule
		$41$ &$[1,4]$ & $[1,4]$ & $\left(\frac{60}{7}, \frac{832}{105}, \frac{124}{15}, \frac{23}{14}, \frac{19}{2}\right)$ \\ \midrule
		$42$ & $[1,4]$ & $[0,4]$ & $\left(\frac{28}{5}, \frac{7}{2}, \frac{49}{10}, \frac{23}{2}, \frac{151}{10}\right)$\\ \midrule
		$43$ & $[0,4]$ & $[1,4]$ & $\left(\frac{14}{3}, \frac{77}{20}, \frac{259}{60}, \frac{2041}{140}, \frac{7303}{420}\right)$ \\ \midrule
		$46$ & $[1,4]$ & $[0,4]$ & $\left(\frac{15}{2}, \frac{104}{15}, \frac{217}{30}, \frac{16}{3}, \frac{67}{6}\right)$ \\ \midrule
		$47$ & $[1,4]$ & $[1,4]$ & $\left(\frac{423}{56}, \frac{423}{56}, \frac{423}{56}, \frac{2465}{504}, \frac{5473}{504}\right)$ \\
		\bottomrule
	\end{tabular}
	\end{table}

In Tables~\ref{tab-1}--\ref{tab:example}, the highlighted $n$'s in boldface address they are neither $\ta$- nor $(\pi-\ta)$-congruent numbers. And in Tables~\ref{tab-2} and \ref{tab:example}, the values $r_1$ and $r_2$ denote, resp., the rank of the (corresponding) $\ta$- and $(\pi-\ta)$-congruent elliptic curves over $\Q$, while $t_1$ and $t_2$, resp., refer to the number of torsion points on the $\ta$- and $(\pi-\ta)$-congruent elliptic curves.

\end{document}